\documentclass[12pt]{amsart}

\textwidth=5.5in \textheight=8.5in
\usepackage{cancel}
\usepackage{latexsym, amssymb, amsmath}
\usepackage{soul,esint}
\usepackage{amsfonts, graphicx}
\usepackage{graphicx,color}
\usepackage[backend=biber,style=alphabetic,sorting=nyt]{biblatex}
\usepackage{amsthm}
\usepackage{hyperref}
\usepackage{enumerate}
\usepackage{color}
\usepackage{verbatim}
\def\e{\varepsilon}

\def\a{{\alpha}}

\def\tr{\operatorname{tr}}

\theoremstyle{plain}
\newtheorem{theorem}[subsection]{Theorem}
\theoremstyle{plain}
\newtheorem{proposition}[subsection]{Proposition}
\newtheorem{lemma}[subsection]{Lemma}
\newtheorem{corollary}[subsection]{Corollary}

\theoremstyle{definition}
\newtheorem{definition}[subsection]{Definition}

\theoremstyle{remark}
\newtheorem{remark}[subsection]{Remark}

\numberwithin{equation}{subsection}

\addbibresource{rdf-morrey.bib}

\newcommand{\del}{\nabla}
\newcommand{\hdel}{\tilde{\nabla}}

\begin{document}
\title[]
{Ricci-DeTurck Flow from Initial Metric with Morrey-type Integrability Condition}

\author{Man-Chun Lee}
\address[Man-Chun Lee]{Room 237, Lady Shaw Building,
The Chinese University of Hong Kong,
Shatin, N.T., Hong Kong}
\email{mclee@math.cuhk.edu.hk}

\author{Stephen Shang Yi Liu}
\address[Stephen Shang Yi Liu]{Room 3446, Academic Building, The Hong Kong University of Science and Technology, Clearwater Bay, Kowloon, Hong Kong}
 \email{masyliu@ust.hk}

\renewcommand{\subjclassname}{
  \textup{2010} Mathematics Subject Classification}
\subjclass[2010]{Primary 53E20
}

\date{\today}

\begin{abstract}
  In this work, we study the short-time existence theory of Ricci-DeTurck flow starting from rough metrics which satisfy a Morrey-type integrability condition. Using the rough existence theory, we show the preservation and improvement of distributional scalar curvature lower bounds  provided the singular set for such metrics is not too large. As an application, we use the Ricci flow smoothing to study the removable singularity related to scalar curvature. Our result supplements those of Jiang-Sheng-Zhang.
\end{abstract}

\keywords{Ricci-DeTurck flow, scalar curvature}

\maketitle

%
%
%

\section{Introduction}\label{sec:introduction}

Let $M^n$ be a Riemannian manifold. Then the Ricci flow is a family of metrics $\hat g(t)$ evolving in the direction of their Ricci tensors,
\begin{equation*}
    \frac{\partial}{\partial t} \hat g(t) = -2\text{Ric}(\hat g(t)).
\end{equation*}

Introduced by Hamilton in \cite{hamilton_three-manifolds_1982}, the Ricci flow has seen a number of successful applications to a number of problems in geometry, most famously in Perelman's resolution of Thurston's Geometrization conjecture. It is a weakly parabolic system, and in \cite{deturck_deforming_1983}, DeTurck showed that it is diffeomorphic to a strictly parabolic system, the Ricci-DeTurck $h$-flow. Precisely, Let $\text{Sym}_2(T^\ast M)$ denote the space of symmetric 2-tensors on $M$ and let $X:\text{Sym}_2(T^\ast M) \to TM$ be given by
\begin{equation*}
    X_{h}(g)^k := g^{ij}\left(\tilde{\Gamma}^k_{ij} - \Gamma^k_{ij}\right)
\end{equation*}
where $\tilde{\Gamma}$ and $\Gamma$ are the Christoffel symbols of $h$ and $g$ respectively. Then the Ricci-DeTurck $h$-flow (here using the terminology as in \cite{simon_deformation_2002} to emphasize the dependence on the background metric $h$) is given by the equation
\begin{equation}\label{eqn:prelim-ricci-deturck-flow}
    \frac{\partial}{\partial t} g(t) = -2\text{Ric}(g(t)) - \mathcal{L}_{X_{h}(g(t))}g(t)
\end{equation}
Solutions to the Ricci-DeTurck $h$-flow are related to a Ricci flow via pullback by diffeomorphisms, that is, if $g(t)$ solves (\ref{eqn:prelim-ricci-deturck-flow}) and $\chi_t:M\to M$ is a family of diffeomorphisms satisfying
\begin{equation}\label{eqn:prelim-rdf-to-rf-diffeomorphisms}
    \begin{cases}
        &X_{h(t)}(g(t))f = \frac{\partial}{\partial t}\left(f \circ \chi_t\right) \text{ for all } f \in C^\infty(M) \\
        &\chi_0 = \text{id},
    \end{cases}
\end{equation}
then $\chi_t^\ast g(t)$ is a Ricci flow with initial data $g(0)$.

In local coordinates, (\ref{eqn:prelim-ricci-deturck-flow}) is given by
\begin{align}\label{eqn:prelim-rdf-local-coords}
    \frac{\partial}{\partial t} g_{ij} &= g^{pq}\hdel_p\hdel_qg_{ij} - g^{k\ell}g_{ip}h^{pq}\tilde{R}_{jkq\ell} - g^{k\ell}g_{jp}h^{pq}\tilde{R}_{ikq\ell} \nonumber \\
    &+ \frac{1}{2}g^{k\ell}g^{pq}\left(\hdel_i g_{pk}\hdel_j g_{q\ell} + 2\hdel_k g_{jp}\hdel_q g_{i\ell} - 2\hdel_k g_{jp} \hdel_\ell g_{iq} \right. \nonumber \\
    &\left. -2\hdel_j g_{pk} \hdel_\ell g_{iq} - 2\hdel_i g_{pk} \hdel_\ell g_{qj}\right),
\end{align}
where $\hdel$ and $\tilde{R}$ denote the Levi-Civita connection and Riemann curvature tensor respectively of $h$ and we are suppressing the dependence on $t$ for $g(t)$ in the notation above.

Since the flow is a parabolic system, one usually expects that the flow will improve the regularity of initial data. In view of this, there has been a wide variety of literature extending the existence theory of Ricci Flow and Ricci-DeTurck $h$-flow to settings with less regularity. The foundational result in this area is by Shi in \cite{shi_deforming_1989} who showed the short-time existence of a solution to the Ricci-DeTurck flow from a metric that is complete with bounded curvature. A metric $g$ is called $L^\infty$ or bi-Lipschitz if $\Lambda^{-1}h \leq g \leq \Lambda h$ almost everywhere on $M$ with $\Lambda >1$ and $h$ is some fixed smooth background metric. Building on the result of Shi, Simon in \cite{simon_deformation_2002} established the following regularization result: if $(1+\delta)^{-1}h \leq g_0 \leq (1+\delta)h$ for some sufficiently small dimensional constant $\delta$, then there exists a short-time solution to the Ricci-DeTurck $h$-flow on $M \times (0,T]$ which is smooth for $t > 0$ with convergence back to $g_0$ in $C^0_\text{loc}$ if $g_0$ was additionally in $C^0_\text{loc}$. In Euclidean space, Koch and Lamm in \cite{koch_geometric_2012} used heat kernel estimates to show the existence and uniqueness of a global and analytic solution to the Ricci-DeTurck $h$-flow on $\mathbb{R}^n$ from $L^\infty$ initial metrics that are close in $L^\infty$ to the Euclidean metric. Building on these results, Burkhardt-Guim in \cite{burkhardt-guim_pointwise_2019} further studied Ricci flow from $C^0$ initial metrics on closed manifolds, in particular relying on heat kernel estimates to establish an iteration scheme to construct a short-time solution and applied this Ricci flow existence theory to study scalar curvature lower bounds and a question of Gromov. In the direction of more general $L^\infty$ initial data, Lamm and Simon in \cite{lamm_ricci_2021} showed the short-time existence for the Ricci-DeTurck $h$-flow on complete four-manifolds for rough initial metrics that are in $L^\infty \cap W^{2,2}$. They showed that solutions are uniformly smooth for positive time and converge back to the initial data in $W^{2,2}_\text{loc}$ sense as $t\to0$. In \cite{chu_ricci-deturck_2022}, Chu and the first named author developed this further and considered the case where $g_0 \in L^\infty \cap W^{1,n}$, showing the existence of the flow starting from metrics which are $L^\infty$ and satisfy a small local gradient concentration in a $W^{1,n}$ sense, and study a number of applications.

Motivated by the work \cite{chu_ricci-deturck_2022}, and the heat kernel based approach of Burkhardt-Guim \cite{burkhardt-guim_pointwise_2019}, in this work we consider the existence theory for metrics that are in $L^\infty$ and satisfy a $L^1$ Morrey-type integrability condition (condition (iii) of Theorem \ref{thm:intro-application-1-statement} below) and study a removable singularity theorem related to scalar curvature. We will see that the Morrey-type condition allows for the use of heat kernel estimates to establish a priori estimates that are important for the existence theory. Our main theorem is the following existence theorem for the smooth $h$-flow with quantitative estimates.

\begin{theorem}\label{thm:intro-application-1-statement}
    Suppose $(M^n, h)$ is a complete Riemannian manifold satisfying \eqref{eqn:h-remark-curvature-estimates}. Suppose $g_0$ is a $C^0_{loc}\cap W^{1,2}_{loc}$ Riemannian metric on $M$ and $\Sigma \subseteq M$ is a compact set so that the following holds:
    \begin{enumerate}[(i)]
        \item $g_0$ is smooth on $M \setminus \Sigma$;
        \item $g_0$ is globally uniformly bi-Lipschitz on $M$: 
         $\exists\Lambda_0 > 1$ such that $$\Lambda_0^{-1} h \leq g_0 \leq \Lambda_0 h\;\;\quad\text{on}\;\;M;$$
        \item there exist $L_0,\delta,r_0 > 0$ and $p\geq 1$ such that for every $x_0 \in M$, $0 < r < r_0$,
        \begin{equation*}
            \fint_{B(x_0,r)} |\hdel g_0|^p d\mu_h \leq L_0 r^{-p+\delta}.
        \end{equation*}
    \end{enumerate}
    Then there are $T,L>0$ depending only on  $n, \Lambda_0, L_0, p, \delta, r_0,h$ and a smooth solution $g(t)$ to the Ricci-DeTurck $h$-flow on $M\times(0,T]$ such that:
        \begin{enumerate}[(a)]
            \item for any $k \in \mathbb{N}$, there is $B_k(n, k,r_0, \Lambda_0, L_0, p, \delta,h) > 0$ such that for all $t \in (0, T]$, $x_0 \in M$,
            \begin{equation*}
                \sup\limits_{M}|\hdel^k g(t)| \leq \frac{B_k}{t^{\frac{1}{2}(k-\frac{\delta}{p})}};
            \end{equation*}
            \item $g(t) \to g_0$ in $C_\text{loc}^0(M)$ as $t \to 0$;
            \item $g(t) \to g_0$ in $C_\text{loc}^\infty(M\setminus\Sigma)$ as $t \to 0$;
        \end{enumerate}
        Moreover, the solution is unique within the class of solutions satisfying (a).
\end{theorem}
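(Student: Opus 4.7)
The plan is to construct $g(t)$ as a limit of smooth Ricci-DeTurck flows $g_\varepsilon(t)$ starting from smooth regularizations of $g_0$, with the sharp derivative rate $t^{-\frac{1}{2}(k-\delta/p)}$ extracted from a Duhamel representation driven by the Morrey condition (iii). First, I would mollify $g_0$ via a smooth kernel adapted to $h$ (for instance convolution with the $h$-heat semigroup acting on symmetric two-tensors) to produce smooth metrics $g_{0,\varepsilon}$ satisfying: $g_{0,\varepsilon}\to g_0$ in $C^0_{\rm loc}(M)$ and in $C^\infty_{\rm loc}(M\setminus\Sigma)$; bi-Lipschitz constants $\Lambda_\varepsilon\to\Lambda_0$; and the Morrey bound (iii) inherited with $L_\varepsilon\to L_0$ on scales $r\ge\sqrt\varepsilon$. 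The latter uses the $L^p$-boundedness of convolution together with standard heat-kernel derivative bounds.

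\emph{Uniform a priori estimates.} For each $\varepsilon>0$, Shi--Simon theory produces a smooth Ricci-DeTurck flow $g_\varepsilon(t)$ starting at $g_{0,\varepsilon}$. Writing \eqref{eqn:prelim-rdf-local-coords} schematically as
\begin{equation*}
    \left(\frac{\partial}{\partial t}-\Delta_h\right)g_{ij} = (g^{pq}-h^{pq})\hdel_p\hdel_q g_{ij} + Q(g,\hdel g) + \tilde R \ast g,
\end{equation*}
where $Q$ is quadratic in $\hdel g$, I would treat the first right-hand-side term as a perturbation (using bi-Lipschitz control of $g$) and apply Duhamel against the $h$-heat kernel $\mathbf{H}$ on symmetric two-tensors to obtain
\begin{equation*}
    g_\varepsilon(t,x)=\int_M \mathbf{H}_t(x,y)\,g_{0,\varepsilon}(y)\,d\mu_h(y) + \int_0^t\!\int_M \mathbf{H}_{t-s}(x,y)\,\mathcal{N}(g_\varepsilon,\hdel g_\varepsilon)(s,y)\,d\mu_h(y)\,ds,
\end{equation*}
where $\mathcal{N}$ collects the nonlinear terms. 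Under \eqref{eqn:h-remark-curvature-estimates}, $\mathbf{H}$ satisfies Gaussian bounds $|\hdel^k\mathbf{H}_t(x,y)|\le C_k t^{-(n+k)/2}e^{-c\,d(x,y)^2/t}$. The key estimate is
\begin{equation*}
    \Bigl|\hdel\!\int_M \mathbf{H}_t(x,y)\,g_{0,\varepsilon}(y)\,d\mu_h\Bigr| \le C\int_M \mathbf{H}_t(x,y)\,|\hdel g_{0,\varepsilon}(y)|\,d\mu_h \le C\,t^{-\frac{1}{2}(1-\delta/p)},
\end{equation*}
obtained by shifting one derivative from $\mathbf{H}$ onto $g_{0,\varepsilon}$ via integration by parts, then decomposing $M$ into annuli $d(x,y)\sim 2^k\sqrt t$ and combining H\"older's inequality with the Morrey bound (iii). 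The nonlinear Duhamel term is then handled by a continuity argument on
\begin{equation*}
    \Phi_\varepsilon(T) := \sup_{0<t\le T} t^{\frac{1}{2}(1-\delta/p)}\,\|\hdel g_\varepsilon(t)\|_{L^\infty(M)},
\end{equation*}
using $|\mathcal{N}|\lesssim|\hdel g|^2+1$ and the convolution identity $\int_0^t (t-s)^{-1/2}s^{-(1-\delta/p)}\,ds = C\,t^{-\frac12(1-2\delta/p)}$, which yields a closed inequality $\Phi_\varepsilon(T)\le C+C\,T^{\delta/(2p)}\Phi_\varepsilon(T)^2$ solvable for $T>0$ small and uniform in $\varepsilon$. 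Propagation of bi-Lipschitz control runs in parallel using the $k=0$ heat-kernel bound applied to $g_\varepsilon(t)-h$. Higher-order estimates $\hdel^k g_\varepsilon$ for $k\ge 2$ follow by bootstrapping with parabolic Schauder in cylinders of scale $\sqrt t$.

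\emph{Limit, initial convergence, uniqueness.} Arzela--Ascoli applied to the $\varepsilon$-uniform estimates produces a subsequential limit $g(t)$, smooth on $M\times(0,T]$, solving the Ricci-DeTurck equation and satisfying (a). For (b), the same Duhamel representation at the level of $g(t)$ itself yields $C^0_{\rm loc}$ convergence to $g_0$ because the linear piece inherits it from semigroup continuity and the nonlinear correction is bounded by $\int_0^t s^{-(1-\delta/p)}\,ds = O(t^{\delta/p})\to 0$. Property (c) follows from interior parabolic regularity in cylinders disjoint from $\Sigma\times\{0\}$, exploiting the smoothness of $g_0$ away from $\Sigma$. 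Uniqueness within the class (a) is obtained by a Gronwall-type inequality for $w:=g_1-g_2$: schematically $(\partial_t-\Delta_h)|w|\le C\,t^{-\frac12(1-\delta/p)}|\hdel w| + C|w|$, whose singular coefficient is time-integrable since $\tfrac12(1-\delta/p)<1$.

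The main obstacle is the sharp derivative bound in (a): producing the smoothing gain $t^{\delta/(2p)}$ beyond the standard $t^{-k/2}$ requires a careful dyadic estimate of $\int\mathbf{H}_t|\hdel g_0|$ against the Morrey condition rather than any pointwise bound on $\hdel g_0$, and then closing the nonlinear continuity scheme on the correctly weighted norm $\Phi_\varepsilon$; this step is the analytic heart of the construction, and all subsequent bootstrapping, limit passage, and uniqueness arguments are calibrated to the exponent $\delta/p$ it produces.
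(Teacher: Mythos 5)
There is a genuine gap at the analytic heart of your scheme. You propose to write the flow via Duhamel against the $h$-heat kernel $\mathbf{H}_t$, treating $(g^{pq}-h^{pq})\hdel_p\hdel_q g_{ij}$ as a perturbation ``using bi-Lipschitz control of $g$.'' But the theorem only assumes $\Lambda_0^{-1}h\le g_0\le\Lambda_0 h$ for an \emph{arbitrary} $\Lambda_0>1$, so $g^{pq}-h^{pq}$ is merely bounded, not small. This term carries two derivatives of $g$ and is not dominated by $|\hdel g|^2+1$, so your bound $|\mathcal{N}|\lesssim|\hdel g|^2+1$ is false as stated. If you integrate by parts to move one derivative onto the kernel and then differentiate the Duhamel formula once more to estimate $\hdel g_\varepsilon$, you face
\begin{equation*}
\int_0^t\!\int_M |\hdel_x\hdel_y\mathbf{H}_{t-s}|\,|g-h|\,|\hdel g|\,d\mu_h\,ds \;\sim\; \Lambda_0\int_0^t (t-s)^{-1}\,s^{-\frac12(1-\delta/p)}\,ds,
\end{equation*}
which diverges; the only known rescue is the Koch--Lamm smallness hypothesis $\|g_0-h\|_{L^\infty}\le\varepsilon(n)$, which is precisely what is not available here. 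Your convolution identity $\int_0^t(t-s)^{-1/2}s^{-(1-\delta/p)}ds$ only handles the genuinely quadratic first-order part of $\mathcal{N}$, so the continuity argument on $\Phi_\varepsilon$ does not close.

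The paper avoids this by never linearizing around $\Delta_h$: it keeps the full quasilinear operator, applies the maximum principle to the scalar quantity $Q=(|\hdel g|^2+1)^{1/2}$ under $\Box_g=\partial_t-g^{ij}\hdel_i\hdel_j-\nabla_X$ on a maximal time interval $[0,T_1]$ where the gradient bound holds with a large constant $B_1$, and represents $Q$ at the touching point by the heat kernel $K$ of $\Box_g$ evaluated against the initial data. Gaussian upper bounds for $K$ are then imported from Bamler--Cabezas-Rivas--Wilking via the DeTurck diffeomorphisms (the hypotheses $|\mathrm{Rm}|\le At^{-1}$ and non-collapsing being supplied by the bootstrap Lemma \ref{lem:estimates-bootstrap} on $[0,T_1]$), and the Morrey condition enters exactly through the dyadic/co-area computation you describe---but applied to $K$, not to $\mathbf{H}_t$. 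Your estimate of the linear piece $\hdel\int\mathbf{H}_t g_0$ is the right computation aimed at the wrong kernel. Your mollification, bootstrap, limit passage, and uniqueness outlines are broadly consistent with the paper's (the paper mollifies by a partition of unity plus convolution and runs an $L^2$ Gronwall argument for uniqueness, using the same integrability of $t^{-\frac12+\frac{\delta}{2p}}$), but without repairing the core gradient estimate the construction does not go through for general $\Lambda_0$.
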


The result is new even if $\Sigma=\emptyset$. Our work is motivated by the removable singularity in view of scalar curvature. In \cite{lee_positive_2015}, Lee--Lefloch introduced a notion of distributional scalar curvature for metrics with lower regularity. In \cite{jiang_removable_2022}, the authors showed that for metrics $g \in C^0 \cap W^{1,p}_{\text{loc}}(M)$ where $M$ is complete asymptotically flat with $n \leq p \leq \infty$ satisfying $R_g \geq 0$ outside some compact singular set $\Sigma$ also has non-negative ADM mass (with rigidity) provided the singular set is not too large in terms of Hausdorff measure. In the compact case,  the authors in \cite{jiang_weak_2021} also show that distributional scalar curvature lower bounds can be preserved and improved to classical scalar curvature lower bounds along Ricci flow for metrics in $W^{1,p}(M)$ for $n < p \leq \infty$. These results are related to positive mass theorems for lower regularity metrics and a conjecture of Schoen regarding the removable singularity of $L^\infty$ metrics on $\mathbb{T}^n$ with co-dimension three singularity and $R_g \geq 0$ on the regular part. In this work, we are primarily interested in the following question: if we strengthen the regularity from $C^0$ slightly to the Morrey-type condition on the gradient, then under what conditions on the singular set could we expect to have scalar curvature rigidity. In Theorem \ref{thm:preservation-distributional-scalar-curvature-lower-bound} and Corollary \ref{thm:non-cpt-dist-scalar-curvature-lower-bdd}, we use the rough existence theory established in Theorem \ref{thm:intro-application-1-statement} to obtain similar results for metrics which satisfy the Morrey-type condition, as long as the singular set is small in terms of a notion of co-dimension considered in \cite{lee_continuous_2021}.

We now outline the rest of this work. In Section \ref{sec:a-priori-estimates}, we establish the a priori estimates that will be used later. In Section \ref{sec:proof-of-main-theorem}, we prove Theorem \ref{thm:intro-application-1-statement}. Finally, in Section \ref{sec:applications}, we study the application outlined above.

{\it Acknowledgement}:
The first-named author was partially supported by Hong Kong RGC grant (Early Career Scheme) of Hong Kong No. 24304222, No. 14300623, and a NSFC grant No. 12222122.

\section{a-priori estimates}\label{sec:a-priori-estimates}

In this section, we establish quantitative estimates that will be important in the proof of the main theorem. In general, we consider rough metrics $g_0$ inside a smooth background manifold $(M,h)$ with $\sup_M|\mathrm{Rm}(h)|<+\infty$. Thanks to Shi's smoothing result \cite{shi_deforming_1989}, in our content we will as well assume  
\begin{equation}\label{eqn:h-remark-curvature-estimates}
  \sup\limits_{M}|\hdel^i\text{Rm}(h)|:=k_i<+\infty,\;\; \forall i\in \mathbb{N}
\end{equation}
where $\hdel$ denotes the connection of $h$. In the following and the rest of this work, we denote by $k_i := \sup\limits_M|\hdel^i\text{Rm}(h)|$ and we will specify using parentheses the quantities that certain numbered constants depend upon, e.g. $C_j = C_j(n,k_1,\dots,k_i) > 0$ denotes a positive constant $C_j$ that depends on the dimension and quantities $k_1$, up to $k_i$. These constants may be changing line by line throughout and we will only re-introduce the parentheses when the dependency changes. All distances, norms and connection are measured with respect to the metric $h$, unless noted otherwise. We also use $a\wedge b$ to denote $\min\{a,b\}$.

\subsection{Parabolic boot-strapping}

Following \cite{shi_deforming_1989} and \cite{simon_deformation_2002}, the following lemma says that given a first order interior estimate, we may parabolically bootstrap to obtain higher order interior estimates.

\begin{lemma}\label{lem:estimates-bootstrap}
    Suppose $g(t)$ is a smooth solution to the Ricci-DeTurck $h$-flow on $M \times [0,T]$ for some smooth background metric $h$ satisfying (\ref{eqn:h-remark-curvature-estimates}) 
    so that
    \begin{enumerate}[(i)]
        \item $\Lambda^{-1}h \leq g(t) \leq \Lambda h$ on $M\times[0,T]$ for some $\Lambda > 1$;
        \item there exists $x_0\in M$, $B_1,\gamma>0$ such that 
        \begin{equation*}
            |\hdel g(x,t)| \leq \frac{B_1}{t^{\frac{1}{2}-\frac{\delta}{2p}}}
        \end{equation*}
        for all $x \in B\left(x_0, 1 + \frac{\gamma}{2}\right), t \in (0, T]$. 
    \end{enumerate}
 Then for any $m\in \mathbb{N}$, there are 
       $ B_m(n,\gamma, m,\Lambda,k_1,\dots,k_m) > 0$
    such that 
    \begin{equation}\label{eqn:bootstrap-higher-order-interior-estimate}
        \sup\limits_{B\left(x_0, 1 + \frac{\gamma}{2^{m+1}}\right)} |\hdel^m g(x,t)| \leq \frac{B_m}{t^{\frac{1}{2}(m-\frac{\delta}{p})}}
    \end{equation}
    for all $t \in (0, T\wedge 1]$. 
\end{lemma}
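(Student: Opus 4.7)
The plan is an induction on $m$. The base case $m=1$ is precisely hypothesis (ii), since $B(x_0, 1+\gamma/4) \subset B(x_0, 1+\gamma/2)$. For the inductive step, suppose (\ref{eqn:bootstrap-higher-order-interior-estimate}) holds for orders $j \in \{1, \ldots, m-1\}$ on $B(x_0, 1+\gamma/2^m)$ with constants depending only on the allowed quantities, and prove the order-$m$ estimate on the slightly smaller ball $B(x_0, 1+\gamma/2^{m+1})$. Note that the nesting of balls is consistent because the order-$j$ conclusion is claimed on $B(x_0, 1+\gamma/2^{j+1}) \supseteq B(x_0, 1+\gamma/2^m)$ for all $j \le m-1$.

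The first step is to derive an evolution inequality for $|\hdel^m g|^2$. Differentiating (\ref{eqn:prelim-rdf-local-coords}) $m$ times covariantly and commuting $\hdel^m$ past the quasilinear Laplacian $g^{pq}\hdel_p\hdel_q$ (which produces terms containing factors of $\hdel^i \mathrm{Rm}(h)$ for $i \le m$), then pairing with $\hdel^m g$ and using hypothesis (i) for uniform ellipticity, one obtains a schematic inequality
$$\left(\partial_t - g^{pq}\hdel_p\hdel_q\right)|\hdel^m g|^2 \;\le\; -2|\hdel^{m+1} g|^2 \,+\, C\,|\hdel g|\,|\hdel^m g|\,|\hdel^{m+1} g| \,+\, Q_m,$$
where $Q_m$ is a polynomial in $|\hdel^j g|$, $j \le m$, with coefficients depending only on $n, \Lambda, k_1, \ldots, k_{m-1}$. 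The highest-order cross term is handled via Cauchy-Schwarz, absorbing half of the good $-2|\hdel^{m+1} g|^2$ and leaving an extra $C|\hdel g|^2|\hdel^m g|^2$ contribution in $Q_m$. Inserting the inductive bounds $|\hdel^j g| \le B_j\, t^{-(j-\delta/p)/2}$ for $j \le m-1$ (valid on the support of the cutoff chosen below) then reduces $Q_m$ to a sum of powers of $t^{-1}$ multiplying $|\hdel^m g|^2$, $|\hdel^m g|$, and $1$, with exponents that combine compatibly with the Bernstein weight $t^{m-\delta/p}$.

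The second step is to apply the parabolic maximum principle to a Bernstein-type auxiliary function
$$\Phi \;=\; \eta^2 \left( A\, t^{m-1-\delta/p}\,|\hdel^{m-1} g|^2 + t^{m-\delta/p}\,|\hdel^m g|^2 \right),$$
where $\eta$ is a smooth cutoff with $\eta \equiv 1$ on $B(x_0, 1+\gamma/2^{m+1})$, $\mathrm{supp}\,\eta \subset B(x_0, 1+\gamma/2^m)$, and $|\hdel^k \eta| \le C(n,\gamma,m)$ for $k = 1, 2$, while $A$ is a constant to be chosen sufficiently large. The virtue of the $A$-term is that $\left(\partial_t - g^{pq}\hdel_p\hdel_q\right)|\hdel^{m-1} g|^2 \le -2|\hdel^m g|^2 + \cdots$ contributes a negative $-2A\,t^{m-1-\delta/p}|\hdel^m g|^2$ to $\left(\partial_t - g^{pq}\hdel_p\hdel_q\right)\Phi$, which for $A$ large enough absorbs the problematic $C'\,t^{-(1-\delta/p)}|\hdel^m g|^2$ contribution from $Q_m$ after multiplication by $t^{m-\delta/p}$. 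A direct computation then yields a differential inequality
$$\left(\partial_t - g^{pq}\hdel_p\hdel_q\right)\Phi \;\le\; C''\,(1 + \Phi) \quad \text{on } M \times (0, T \wedge 1].$$
Since $\eta$ has compact support and $g(t)$ is smooth on $M \times [0,T]$, $\Phi$ is bounded at $t=0$, and the parabolic maximum principle combined with Grönwall yields $\Phi \le B_m^2$ on $M \times (0, T \wedge 1]$. Restricting to $B(x_0, 1+\gamma/2^{m+1})$, where $\eta \equiv 1$, produces the required estimate. The main obstacle is the schematic bookkeeping in the evolution equation for $|\hdel^m g|^2$: (\ref{eqn:prelim-rdf-local-coords}) has several quadratic first-order terms, and iterated covariant differentiation generates many lower-order contributions whose time-scaling must be carefully verified to be compatible with the Bernstein weights; once that is secured, the maximum principle step follows the standard Shi-Simon pattern of \cite{shi_deforming_1989, simon_deformation_2002}.
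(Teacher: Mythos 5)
Your overall strategy (induction on $m$, a Bernstein-type auxiliary function with a cutoff, and the parabolic maximum principle) is the right family of ideas and matches the paper's sketch in spirit, but your auxiliary function is additive, $\Phi = \eta^2\bigl(A t^{m-1-\delta/p}|\hdel^{m-1}g|^2 + t^{m-\delta/p}|\hdel^m g|^2\bigr)$, whereas the paper (following Shi and Simon) uses the multiplicative quantity $t^{2m-1-\frac{2\delta}{p}}|\hdel^m g|^2\bigl(|\hdel^{m-1}g|^2 + Lt^{-m+1+\frac{\delta}{p}}\bigr)$. The difference is not cosmetic: the product structure is what generates the good quartic term $-c\,t^{2m-1-\frac{2\delta}{p}}|\hdel^m g|^4$ (from $|\hdel^m g|^2$ multiplying the $-2|\hdel^m g|^2$ appearing in the evolution of $|\hdel^{m-1}g|^2$), and that quartic term is what absorbs every contribution carrying a factor $t^{-1}$. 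Your additive $\Phi$ has no such term, and the claimed inequality $(\partial_t - g^{pq}\hdel_p\hdel_q)\Phi \le C''(1+\Phi)$ is false. Concretely, differentiating the time weight of the lower-order block produces $A(m-1-\tfrac{\delta}{p})\,\eta^2\, t^{m-2-\delta/p}|\hdel^{m-1}g|^2$, which by the (sharp) inductive bound is of size $A B_{m-1}^2\, t^{-1}$ and in general no better; this is a non-integrable inhomogeneity, so the Gr\"onwall step cannot close (one only gets $\sup\Phi(t) \le \sup\Phi(t_0) + C\log(t/t_0) + \cdots$, and the logarithm diverges as $t_0\to 0$). The good term $-2A\Lambda^{-1}\eta^2 t^{m-1-\delta/p}|\hdel^m g|^2$ cannot absorb it, since it involves $|\hdel^m g|^2$ rather than $|\hdel^{m-1}g|^2$.

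A second, related gap concerns the cutoff. After Cauchy--Schwarz, the terms $-2\hdel(\eta^2)\cdot\hdel\bigl(t^{m-\delta/p}|\hdel^m g|^2\bigr)$ and $-t^{m-\delta/p}|\hdel^m g|^2\,\Delta(\eta^2)$ leave a contribution of size $C|\hdel\eta|^2\, t^{m-\delta/p}|\hdel^m g|^2$ \emph{without} the factor $\eta^2$. Near the edge of $\mathrm{supp}\,\eta$ this cannot be dominated by $-A\eta^2 t^{m-1-\delta/p}|\hdel^m g|^2$ (the ratio is of order $t|\hdel\eta|^2/(A\eta^2)\to\infty$), whereas in the multiplicative setup it is absorbed by the quartic term via Young's inequality, using that $\bigl(t^{m-\delta/p}\bigr)^2/t^{2m-1-2\delta/p}=t$ is bounded. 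To repair the argument, replace $\Phi$ by a cutoff times the paper's product quantity; then at an interior spacetime maximum the quartic term dominates every $t^{-1}\times(\text{quadratic})$ contribution and yields $|\hdel^m g|^2\le B_m^2\, t^{-(m-\frac{\delta}{p})}$ directly, with no Gr\"onwall step needed.
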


\begin{proof}[Sketch of Proof]
    The proof of the above Lemma is a standard induction argument, using Bernstein-Shi's trick and the non-scaling invariant order on $t$ given in assumption (ii) above instead of the $t^{-\frac{1}{2}}$ found in Lemmas 4.1-4.2 of \cite{simon_deformation_2002}. For $m \in \mathbb{N}$, we obtain (\ref{eqn:bootstrap-higher-order-interior-estimate}) by considering the evolution equation under the operator $\partial_t - g^{ij}\hdel_i\hdel_j$ of 
    \begin{equation*}
        t^{2m-1-\frac{2\delta}{p}}|\hdel^m g|^2\left(|\hdel^{m-1}g|^2 + Lt^{-m+1+\frac{\delta}{p}}\right),
    \end{equation*}
  together with standard cut-off argument.
\end{proof}

\subsection{Gradient Estimate}

We now show that the interior gradient estimate assumed in Lemma \ref{lem:estimates-bootstrap} can be obtained for smooth solutions of the Ricci-DeTurck $h$-flow under the $L^\infty$ and Morrey-type integrability assumption. Once the $C^1$ estimate has been established, Lemma \ref{lem:estimates-bootstrap} gives us all higher order estimates.

\begin{proposition}\label{prop:gradient-estimate}
    Suppose $g(t)$ is a smooth solution to the Ricci-DeTurck $h$-flow on $M \times [0,T]$ for some smooth initial background metric $h$ satisfying (\ref{eqn:h-remark-curvature-estimates}) so that
    \begin{enumerate}[(i)]
        \item $\Lambda^{-1}h \leq g(t) \leq \Lambda h$ on $M\times[0,T]$ for some $\Lambda > 1$;
        \item there exists $p \geq 1, L_0, r_0 ,\delta > 0$,  such that for all $x_0 \in M$ and $0 < r < r_0$,
        \begin{equation*}
            \fint_{B(x_0,r)} |\hdel g(x,0)|^p d\mu_h(x) \leq L_0r^{-p+\delta};
        \end{equation*}
        \item $\sup_{M\times [0,T]} |\tilde\nabla g(x,t)|<+\infty$.
    \end{enumerate}
    Then there are $B_1 = B_1(n,\Lambda,L_0,k_1,k_2), T_1 = T_1(n,\Lambda,L_0,p,\delta,k_1,k_2) > 0$ such that
    \begin{equation}\label{eqn:interior-gradient-estimate}
        \sup\limits_M|\hdel g(x,t)| \leq \frac{B_1}{t^{\frac{1}{2}-\frac{\delta}{2p}}}
    \end{equation}
    for all $x \in M, t \in (0, T_1\wedge T]$.
\end{proposition}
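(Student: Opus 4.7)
The plan is to combine Duhamel's principle via the heat kernel of the linearized operator with a bootstrap in time, where assumption (iii) is used only to initialize a continuity argument. First I would derive a pointwise parabolic inequality for $|\hdel g|^p$. A direct Bernstein-type computation from \eqref{eqn:prelim-rdf-local-coords} using the bi-Lipschitz bound (i), together with Kato's inequality to handle the $1 \leq p < 2$ regime, yields
\begin{equation*}
    \left(\partial_t - g^{ij}\hdel_i\hdel_j\right)|\hdel g|^p \leq -c_p|\hdel g|^{p-2}|\hdel^2 g|^2 + C_\Lambda\bigl(|\hdel g|^{p+2} + 1\bigr),
\end{equation*}
where $C_\Lambda = C_\Lambda(n, \Lambda, k_1, k_2)$ and $c_p > 0$, valid uniformly in $p \geq 1$ (for $p \geq 2$ the sign is automatic; for $p < 2$, one uses $|\hdel|\hdel g||^2 \leq |\hdel^2 g|^2$ to absorb the bad term).

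Let $P(x,y,t,s)$ denote the time-inhomogeneous heat kernel of $\partial_t - g^{ij}\hdel_i\hdel_j$; under (i) and \eqref{eqn:h-remark-curvature-estimates} standard Aronson-Nash theory gives Gaussian bounds $P \leq C(t-s)^{-n/2}\exp(-d_h(x,y)^2/(C(t-s)))$. Duhamel then furnishes
\begin{equation*}
    |\hdel g|^p(x,t) \leq \int_M P(x,y,t,0)|\hdel g_0(y)|^p\, d\mu_h + C_\Lambda\int_0^t\!\!\int_M P(x,y,t,s)\bigl(|\hdel g|^{p+2} + 1\bigr)\, d\mu_h\, ds.
\end{equation*}
Decomposing the spatial integral into dyadic annuli about $x$ at scale $\sqrt{t}$ and applying the Morrey hypothesis (ii) annulus by annulus, the Gaussian decay of $P$ produces
\begin{equation*}
    \int_M P(x,y,t,0)|\hdel g_0(y)|^p\, d\mu_h \leq CL_0\, t^{(\delta-p)/2},
\end{equation*}
which, after taking the $p$-th root, accounts for the claimed decay $t^{-(1/2 - \delta/(2p))}$.

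The remaining task is to control the nonlinear Duhamel tail by a continuity argument. Set $T_* := \sup\{t \in (0, T] : |\hdel g|^p(x,s) \leq A s^{(\delta-p)/2}\text{ on } M \times (0, t]\}$ with $A := 2CL_0$. Condition (iii) ensures $T_* > 0$ because the ansatz blows up as $s \to 0^+$. Plugging the sup ansatz alone into $|\hdel g|^{p+2}$ would require $s^{(\delta-p)(p+2)/(2p)}$ to be time-integrable, which fails when $\delta$ is small; this is the main obstacle. To bypass it, I would propagate the Morrey condition itself along the flow, establishing
\begin{equation*}
    \fint_{B(x_0, r)} |\hdel g(\cdot, s)|^p\, d\mu_h \leq 2L_0\, r^{-p+\delta}, \qquad r < r_0,\ s \in (0, T_*],
\end{equation*}
by testing the evolution inequality against a cutoff supported on $B(x_0, 2r)$ and running a Gronwall-type argument in the Morrey norm, absorbing the nonlinear contribution via the $L^\infty$-ansatz valid on $(0, T_*]$. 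With the Morrey norm preserved, the dyadic-annulus heat kernel estimate applied at every time slice yields $\int_M P(x,y,t,s)|\hdel g(\cdot, s)|^p\, d\mu_h \leq CL_0(t-s)^{(\delta-p)/2}$, so the nonlinear tail reduces to a convergent Beta-type time integral of strictly higher order in $t$ than the initial-data term by a factor $t^{\delta/p}$. Choosing $T_1$ small enough to absorb the constants closes the bootstrap, and extracting the $p$-th root yields the stated pointwise gradient estimate on $M \times (0, T_1 \wedge T]$.
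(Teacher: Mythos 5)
Your overall architecture (heat kernel representation $+$ Morrey condition on the initial data $+$ a continuity argument seeded by hypothesis (iii)) matches the paper's, and your dyadic-annulus estimate of the initial-data term is correct, but two load-bearing steps are gaps. First, the Gaussian upper bound $P\leq C(t-s)^{-n/2}\exp(-d_h^2/C(t-s))$ with $C=C(n,\Lambda)$ for the kernel of $\partial_t-g^{ij}\hdel_i\hdel_j$ is not ``standard Aronson--Nash'': that theory is for divergence-form operators, whereas $g^{ij}\hdel_i\hdel_j$ is non-divergence form with coefficients controlled only in $L^\infty$, and for such operators the on-diagonal bound $(t-s)^{-n/2}$ with ellipticity-only constants is false in general (the fundamental solution is only known to lie in some $L^q$, $q$ near $n/(n-1)$). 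You cannot take $C$ depending on derivatives of $g$ either, since that is circular. The paper's resolution is the essential idea you are missing: it works with the operator $\Box_g=\partial_t-\Delta^g-\nabla_X$ (the difference from yours is a drift of size $O(|\hdel g|)$, absorbable in the Bernstein computation), whose kernel is the pullback by the DeTurck diffeomorphisms of the heat kernel coupled to the Ricci flow $h(t)=\chi_t^*g(t)$; the Gaussian bound then comes from Bamler--Cabezas-Rivas--Wilking, using the curvature decay $|\mathrm{Rm}|\lesssim t^{-1+\alpha}$ and non-collapsing available on the time interval where the bootstrap hypothesis holds.

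Second, and independently, your Duhamel treatment of the nonlinear tail cannot close for general $p$ and $\delta$. Even granting the propagated Morrey bound, your splitting gives
\begin{equation*}
\int_0^t \Bigl(\sup_M|\hdel g(\cdot,s)|^2\Bigr)\Bigl(\int_M P(x,y,t,s)|\hdel g(y,s)|^p\,d\mu_h\Bigr)ds\;\lesssim\;\int_0^t s^{\frac{\delta-p}{p}}(t-s)^{\frac{\delta-p}{2}}\,ds,
\end{equation*}
which diverges at $s=t$ whenever $\delta<p-2$. A weight count shows no redistribution helps: the term $|\hdel g|^{p+2}$ carries total parabolic weight $\tfrac{p+2}{2}(1-\tfrac{\delta}{p})$, which must be split as $a+b$ with $a<1$ (the $s$-endpoint) and $b<1$ (the $(t-s)$-endpoint), and this is impossible once $p\gtrsim 2+\delta$. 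The proposed Morrey-propagation lemma is also itself a substantial unproven claim. The paper sidesteps all of this: using the a priori bound valid up to the maximal time $T_1$, it writes $Q^3\leq (B_1^2t^{-1+\delta/p}+1)Q$ --- a \emph{linear} term with time-integrable potential --- and absorbs it with the integrating factor $f(t)=\exp(-A(t^{\delta/p}+t))$, so that $f(t)Q-Ct$ is an exact subsolution and the heat kernel acts only on the initial data. Finally, a smaller point: your differential inequality for $|\hdel g|^p$ with $1\leq p<2$ has a sign problem from the concavity of $x\mapsto x^{p/2}$ (the extra term $+\tfrac{p}{2}(1-\tfrac{p}{2})|\hdel g|^{p-4}|\hdel|\hdel g|^2|^2$ need not be dominated by the available Hessian term, whose coefficient depends on $\Lambda$); the clean fix, which the paper uses, is to work with $Q=(|\hdel g|^2+1)^{1/2}$ and downgrade the Morrey hypothesis to exponent $1$ by H\"older.
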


\begin{proof}
    For the sake of convenience, we will suppress the dependence on $t$ and denote $g(t)$ by $g$ unless explicitly stated otherwise. We might assume $T\leq 1,p>\delta$ and $r_0\leq 1$. We will use $C$ to denote any constants depending only on $n,k_1,k_2,\Lambda,L_0$ which might change line by line.
    
Let $B_1\geq 1$ be a constant to be chosen. We let $T_1$ to be the maximal time on which (\ref{eqn:interior-gradient-estimate}) holds. We have $T_1>0$ thanks to (iii). Moreover, if $T_1<T$, then  there is a $x_0 \in M$ such that
    \begin{equation}\label{eqn:gradient-estimate-maximal-T1}
        \begin{cases}
            &\sup\limits_M |\hdel g(x,t)| \leq \frac{B_1}{t^{\frac{1}{2}-\frac{\delta}{2p}}} \text{ on } (0,T_1) \text{ and, } \\
            &|\hdel g(x_0, T_1)| \geq   \frac{B_1}{2T_1^{\frac{1}{2}-\frac{\delta}{2p}}}.
        \end{cases}
    \end{equation}
   It suffices to estimate $T_1$ from below. We might assume $T_1\leq  T$ since otherwise the result holds trivially. 
    
    Our first step is to compute the evolution equation of $|\hdel g|^2$ under the operator $\Box_g := \frac{\partial}{\partial t} - \Delta^{g} - \nabla_X$ where $\Delta^g = g^{ij}\del_i\del_j$ is taking derivatives with respect to the metric $g$ instead of $h$. By (\ref{eqn:prelim-ricci-deturck-flow}) and Young's inequality, 
    \begin{align}\label{eqn:gradient-estimate-partialt-norm-hdelg}
        \frac{\partial}{\partial t} |\hdel g|^2 \leq g^{ij}\hdel_i\hdel_j|\hdel g|^2 - C^{-1}|\hdel^2 g|^2 + C |\hdel g|^4 + C|\hdel g|.
    \end{align}
    Since $|\hdel g|$ may not be smooth, define the smooth function $Q$ by
    \begin{equation*}
        Q := \left(|\hdel g|^2 + 1\right)^{\frac{1}{2}}
    \end{equation*}
%
which satisfies
    \begin{equation}\label{eqn:gradient-estimate-box-Q2}
        \Box_g Q \leq C Q^3 + C.
    \end{equation}
    Let $f(t) = \exp\left(-A(t^{\frac{\delta}{p}} +  t)\right)$ for a constant $A > 0$ to be determined. For all $t < T_1$, (\ref{eqn:gradient-estimate-box-Q2}) and (\ref{eqn:gradient-estimate-maximal-T1}) imply 
    \begin{align}\label{eqn:gradient-estimate-box-Q3}
        \Box_g\left(f(t)Q-Ct\right) &= f(t)\left(-\frac{A\delta}{p}t^{\frac{\delta}{p}-1} - A\right)Q + f(t)\Box_g Q - C \nonumber \\
        & \leq 0
    \end{align}
    by choosing $A$ sufficiently large in the second inequality above and using the fact that $f(t) \leq 1$ for all $t \geq 0$ for the last inequality.
    Hence, by the maximum principle we obtain
    \begin{equation}\label{eqn:gradient-estimate-invoke-heat-kernel}
        \frac14 f(T_1)B_1T_1^{-\frac{1}{2}+\frac{\delta}{2p}} - CT_1 \leq \int_{M} K(x_0, T_1; y, 0)f(0)Q(y,0)d\mu_h(y)
    \end{equation}
    where $K$ is the heat kernel for the heat operator $\Box_g$.
    
    We now wish to obtain an estimate for the heat kernel $K$. We do this by modifying an estimate established by Bamler--Cabezas-Rivas--Wilking in \cite{bamler_ricci_2017}. Let $\widetilde{G}(x,t;y,s)$ denote the heat kernel associated to the backwards heat equation coupled with a Ricci flow $h(t)$. That is,
    \begin{equation*}
        \left(\partial_s + \Delta^{h_t}_{y,s}\right)\widetilde{G}(x,t;\cdot,\cdot) = 0,\quad \text{and} \quad \lim\limits_{s\to t^-}\widetilde{G}(x,t;\cdot,s) = \delta_x(\cdot).
    \end{equation*}
    Then for all $(y,s) \in M \times [0,T]$, $\widetilde{G}(\cdot,\cdot;y,s)$ is the heat kernel associated to the conjugate equation
    \begin{equation*}
        \left(\partial_t - \Delta^{h_t}_{x,t} - R_{h_t}\right)\widetilde{G}(\cdot,\cdot;y,s) = 0,\quad \text{and} \quad \lim\limits_{t\to s^+}\widetilde{G}(\cdot,t;y,s) = \delta_y(x).
    \end{equation*}
    Proposition 3.1 of \cite{bamler_ricci_2017} gives the following estimate: for any $n, A > 0$, there is a $C(n,A) < \infty$ such that the following holds: Let $(M, h(t)), t\in[0,T]$ be a complete Ricci flow satisfying
    \begin{equation}\label{eqn:BCW-estimate-heat-kernel-assumptions}
        |\text{Rm}(x,t)| \leq At^{-1},\quad \text{and} \quad \text{Vol}_{h_t}\left(B_{h_t}(x,\sqrt{t})\right) \geq A^{-1}t^{n/2}
    \end{equation}
    for all $(x,t)\in M\times(0,T]$. Then
    \begin{equation}\label{eqn:BCW-estimate}
        \widetilde{G}(x,t;y,s) \leq \frac{C}{t^{n/2}}\exp{\left(-\frac{d^2_s(x,y)}{Ct}\right)}
    \end{equation}
    for any $0 \leq 2s \leq t$. 
    
    Let $G(x,t;y,s)$ denote the heat kernel associated with the heat equation coupled with the Ricci Flow $h(t)$
    \begin{equation*}
        \left(\partial_t - \Delta^{h_t}_{x,t}\right)G(\cdot,\cdot;y,s) = 0,\quad \text{and}\quad \lim\limits_{t\to s^+}G(\cdot,t;y,s) = \delta_y(x).
    \end{equation*}
    Choosing the Ricci flow $h(t)$ above to be the one related to $g(t)$ via pullback by diffeomorphisms generated by $X$, that is, $h(t) = \chi_t^\ast g(t)$, then the heat kernel $K$ that we are interested is related to the heat kernel $G$ also by pulling back the diffeomorphisms generated by $X$. We first show that under our assumptions, (\ref{eqn:BCW-estimate}) can be used to obtain an estimate for $G(x,t;y,0)$. By our assumptions and Lemma \ref{lem:estimates-bootstrap}, we have the bound 
    \begin{equation*}
        \sup_M|R_{h_t}|=\sup_M|R_{g_t}| \leq B_2 t^{-1+\alpha}
    \end{equation*}
    for some constant $B_2(n,\Lambda,k_1,k_2,B_1) < \infty$ and some small $\alpha = \alpha(p,\delta) > 0$. Then we have
    \begin{equation*}
        \left(\partial_t - \Delta^{h_t}_{x,t}\right)\widetilde{G} = R_{h_t}\widetilde{G} \geq -B_2t^{\alpha-1}\widetilde{G}
    \end{equation*}
    which gives
    \begin{align*}
      &\quad  \left(\partial_t - \Delta^{h_t}_{x,t}\right)(\exp{(\a^{-1}B_2t^\alpha)}\widetilde{G})\\ &\geq -B_2t^{\alpha - 1}\exp{(\a^{-1}B_2t^\alpha)}\widetilde{G} +  B_2t^{\alpha-1}\exp{(\a^{-1}B_2t^\alpha)}\widetilde{G}\geq 0.
    \end{align*}
Since at time $t = s>0$, $G(x,t;y,s) = \widetilde{G}(x,t;y,s)=\delta_y(x)$, the maximum principle  gives
    \begin{equation*}
        G(x,t;y,s)\leq \exp{(\a^{-1}B_2t^\alpha)}\widetilde{G}(x,t;y,s) \leq 2\widetilde{G}(x,t;y,s)
    \end{equation*}
 if we shrink $T_1$ depending on $B_2,\delta,p$. Note that our assumptions also ensures that the Ricci flow $h(t) = \chi_t^\ast g(t)$ satisfies the assumptions (\ref{eqn:BCW-estimate-heat-kernel-assumptions}) above. Finally, writing $K(x,t;y,s) = G(\chi_t^{-1}(x),t;\chi_s^{-1}(y),s)$ and after computations similar to that of Lemma 2.9 of \cite{burkhardt-guim_pointwise_2019}, we obtain the following estimate for $K$: there is a $C = C(n,\Lambda) > 0$ such that
    \begin{equation}\label{eqn:gradient-estimate-K-heat-kernel-estimate}
        K(x,t;y,s) \leq \frac{C}{t^{n/2}}\exp{\left(-\frac{d^2_h(x,y)}{Ct}\right)}
    \end{equation}
    for any $t\in (0,  T]$.
    
    By \eqref{eqn:gradient-estimate-K-heat-kernel-estimate}, the co-area formula and Stokes' Theorem, we have on the right hand side of (\ref{eqn:gradient-estimate-invoke-heat-kernel})
    \begin{align}\label{eqn:gradient-estimate-Q-heat-kernel-estimate2}
        &\int_{M} K(x_0, T_1; y, 0)f(0)Q(y,0)d\mu_h(y) \nonumber \\
        \quad& \leq \int_{M} \frac{C_0}{T_1^{\frac{n}{2}}}\exp\left(-\frac{d^2_{h_0}(x_0, y)}{C_0 T_1}\right)\left(|\hdel g_0|^2 +1\right)^{\frac{1}{2}}d\mu_h(y) \nonumber \\
        &= \int_0^{\infty} \frac{C_0}{T_1^{\frac{n}{2}}}\exp\left(-\frac{r^2}{C_0 T_1}\right)\left(\int_{\partial B(x_0, r)}\left(|\hdel g_0|^2 + 1\right)^{\frac{1}{2}}dS(y)\right)dr \nonumber \\
        &= \int_0^{\infty} \frac{C_0}{T_1^{\frac{n}{2}+1}}\exp\left(-\frac{r^2}{C_0 T_1}\right)\frac{2r}{C_0}\left(\int_{B(x_0, r)}\left(|\hdel g_0|^2 + 1\right)^{\frac{1}{2}}d\mu_h(y)\right)dr
    \end{align}
    where $dS$ is the surface area measure induced by $d\mu_h$ and $C_0$ above is the constant obtained from (\ref{eqn:gradient-estimate-K-heat-kernel-estimate}). For $p \geq 1$, assumption (ii) and H\"older's inequality gives
    \begin{equation*}
        \int_{B(x_0, r)} \left(|\hdel g_0|^2 + 1\right)^{\frac{1}{2}}d\mu_h(y) \leq CL_0r^{n-1+\frac{\delta}{p}}
    \end{equation*}
    for all $0<r<r_0$. If $r>r_0$, we estimate it using trivial bound from \eqref{eqn:gradient-estimate-maximal-T1} and volume comparison:
     \begin{equation*}
        \int_{B(x_0, r)} \left(|\hdel g_0|^2 + 1\right)^{\frac{1}{2}}d\mu_h(y) \leq  B_1t^{\frac\delta{2p}-\frac12}\cdot \exp\left(Cr\right).
    \end{equation*}

    Substituting this into (\ref{eqn:gradient-estimate-Q-heat-kernel-estimate2}) we obtain,
    \begin{align*}
        &\int_0^{\infty} \frac{C_0}{T_1^{\frac{n}{2}+1}}\exp\left(-\frac{r^2}{C_0 T_1}\right)\frac{2r}{C_0}\left(\int_{B(x_0, r)}\left(|\hdel g_0|^2 + 1\right)^{\frac{1}{2}}d\mu_h(y)\right)dr \nonumber \\
        &\quad \leq \int_0^{r_0} \frac{C}{T_1^{\frac{n}{2}+1}}\exp\left(-\frac{r^2}{C_0 T_1}\right)r^{n+\frac{\delta}{p}}dr + \int_{r_0}^{\infty} \frac{2B_1}{T_1^{\frac{n}{2}+\frac32-\frac{\delta}{p}}}\exp\left(Cr-\frac{r^2}{C_0 T_1}\right)  dr \nonumber \\
        &\quad \leq CT_1^{-\frac{1}{2}+\frac{\delta}{2p}}\int_0^\infty\exp\left(-\frac{s^2}{C_0}\right)s^{n+\frac{\delta}{p}}ds + \int_{\frac{1}{\sqrt{T_1}}}^\infty \frac{CB_1}{T_1^{1+\frac{n}{2}}}\exp\left(-\frac{r^2}{2C_0} \right) dr\\
        &\quad\leq CT_1^{-\frac{1}{2}+\frac{\delta}{2p}} +  \frac{CB_1}{T_1^{1+\frac{n}{2}}}\exp\left(-\frac{1}{2C_0T_1} \right).
    \end{align*}
  So we have
    \begin{align}\label{eqn:gradient-estimate-Q-heat-kernel-estimate4}
        \frac14 e^{-AT_1^{\frac{\delta}{p}}-AT_1}B_1T_1^{-\frac{1}{2}+\frac{\delta}{2p}} &\leq C_1 T_1^{-\frac{1}{2}+\frac{\delta}{2p}} +  \frac{CB_1}{T_1^{1+\frac{n}{2}}}\exp\left(-\frac{1}{2C_0T_1} \right)+ C T_1.
    \end{align}
    for some $C_1(n,\Lambda,k_1,k_2,L_0)>0$.
 By choosing $B_1=8C_1$, we see that $T_1$ is bounded from below by a constant depending only on $\delta,p,k_1,k_2,n,\Lambda,L_0$.
\end{proof}

\section{Short-time existence}\label{sec:proof-of-main-theorem}
In this section, we will prove short-time existence on metrics which are possibly singular and satisfy the Morrey-type integrability condition. We first consider the case when $\Sigma=\emptyset$, that is, when the initial data is smooth and satisfies the uniform Morrey-type integrability condition. We note that since $M$ is possibly non-compact, the uniform short-time existence is not covered by the work of Simon \cite{simon_deformation_2002}.
\begin{proof}[Proof of Existence in Theorem~\ref{thm:intro-application-1-statement} when $\Sigma=\emptyset$]

Without loss of generality, we assume $M$ to be non-compact. By \cite{tam_exhaustion_2010} and $|\text{Rm}(h)|\leq k_0$, there is $\rho \in C^\infty_\text{loc}(M)$ such that $|\hdel \rho|^2 + |\hdel^2 \rho| \leq 1$ and
    \begin{equation*}
        C(n,k_0)^{-1}(d_h(\cdot, p)+1)\leq \rho(\cdot) \leq  C(n,k_0)(d_h(\cdot,p) + 1)
    \end{equation*}
    for some $ C(n,k_0)>0$. Let $\phi$ be a smooth function on $[0,+\infty)$ such that $\phi \equiv 1$ on $[0,1]$, $\phi \equiv 0$ on $[2,+\infty)$ and $0\leq-\phi'\leq 10$. Fix $R_i\to+\infty$ and define a smooth metric 
    $$g_{i,0}=\phi(\rho/R_i)g_0+(1-\phi(\rho/R_i)) h$$
on $M$ which coincide with $h$ at spatial infinity of $M$ and coincide with $g_0$ on compact subset of $M$ as $i\to+\infty$. Then we might apply Theorem A.1 of \cite{lamm_ricci_2021} (which is a modification of Shi's classical existence theory in \cite{shi_deforming_1989}), to find  short-time solution to the Ricci-DeTurck $h$-flow $g_i(t)$ on $M \times [0,S_i]$ for some maximal existence time $S_i > 0$ such that $g_i(0) = g_{i,0}$ and $\sup\limits_M |\hdel^m g_i(\cdot, t)|<+\infty$ for all $t \in [0, S_i]$ and all $m\in \mathbb{N}$. 

Since 
\begin{equation}
|\hdel g_{i,0}|\leq \frac{C(n,k_0)}{R_i}|g_0-h|+\phi\cdot |\hdel g_0|,
\end{equation}
the metric $g_{i,0}$ satisfies assumption of  Proposition \ref{prop:gradient-estimate} uniformly for all $i\to+\infty$. Hence, Proposition \ref{prop:gradient-estimate} and Lemmas \ref{lem:estimates-bootstrap} apply to see that there is a $T(n, \Lambda_0, L_0, p, \delta,h) > 0$ such that $T \leq S_i$ with the solution $g_i(t)$ obtained above also satisfying (a) for all $t \in (0, T]$ uniformly for all $i\to+\infty$. We therefore obtain a smooth Ricci-Deturck $h$-flow $g(t)$ on $M\times (0,T]$ by taking subsequence $i\to+\infty$ using  the Arzel\`a--Ascoli theorem. Since $|\hdel g_i|$ has a uniform integrable bound in $t$, we see that $g(t)$ exists as a $C^0_{loc}$ metric on $M\times [0,T]$. Indeed, by shrinking $T$ we have 
\begin{equation}
(2\Lambda)^{-1}h\leq g(t)\leq 2\Lambda h
\end{equation}
on $M\times [0,T]$ by integrating $\partial_t g$ in time.  
\end{proof}

We now want to consider the case when $\Sigma$ is a non-empty compact subset of $M$. We start with constructing a $C^\infty$ approximation of $g_0$, see also \cite{lee_positive_2013,shi_scalar_2016,
lee_continuous_2021,grant_positive_2014} and references therein for the other similar approximation schemes.

\begin{lemma}\label{lem:mollification-scheme}
    Suppose $g_0$ is a $C^0_{loc}$ metric on $M$ and is smooth away from a compact set $\Sigma$. Moreover, suppose $g_0$ satisfies:
    \begin{enumerate}[(i)]
        \item there is a $\Lambda_0 > 1$ such that on $M$,
        \begin{equation*}
            \Lambda_0^{-1}h \leq g_0 \leq \Lambda_0 h;
        \end{equation*}
        \item there exist $L_0,\delta,r_0 > 0$ and $p\geq 1$ such that for every $x_0 \in M$, $0 < r < r_0$,
        \begin{equation*}
            \fint_{B(x_0,r)} |\hdel g_0|^p d\mu_h \leq L_0 r^{-p+\delta}.
        \end{equation*}
    \end{enumerate}
    Then there is a sequence of smooth metrics $g_{i,0}$ on $M$ such that 
    \begin{enumerate}[(a)]
        \item for $i$ sufficiently large,
        \begin{equation*}
            (2\Lambda_0)^{-1}h \leq g_{i,0} \leq 2\Lambda_0 h\quad\text{on}\;\;M;
        \end{equation*}
        \item  there exist $L_1,r_1 > 0$  such that for every $x_0 \in M$, $0 < r < r_1$,
        \begin{equation*}
            \fint_{B(x_0,r)} |\hdel g_{i,0}|^p d\mu_h \leq L_1 r^{-p+\delta};
        \end{equation*}
        \item $g_{i,0} \to g_0$ in $C^0_{loc}$ on $M$ as $i \to \infty$;
        \item $g_{i,0} \to g_0$ in $C^\infty_{loc}$ on $M\setminus\Sigma$ as $i \to \infty$;
        \item $g_{i,0} \to g_0$ in $W^{1,p}_\text{loc}(M)$ as $i\to+\infty$.
    \end{enumerate}
\end{lemma}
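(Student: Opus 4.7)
The plan is to define $g_{i,0}$ via a standard mollification of $g_0$ at scales $\epsilon_i \to 0$, arranged so that the Morrey-type integrability is preserved uniformly in $i$. Thanks to the bounded geometry of $h$ coming from \eqref{eqn:h-remark-curvature-estimates}, I would fix a locally finite cover of $M$ by coordinate charts $\{(U_\alpha, \varphi_\alpha)\}$ of uniform size and bounded multiplicity, together with a subordinate partition of unity $\{\chi_\alpha\}$ whose $C^k$ norms (with respect to $h$) are uniformly controlled. I would then set
\[
g_{i,0} \; := \; \sum_\alpha \bigl( \chi_\alpha \, g_0 \bigr) \ast \phi_{\epsilon_i},
\]
where $\phi_{\epsilon_i}$ denotes a standard Euclidean mollifier of scale $\epsilon_i$ applied componentwise in each chart to the compactly supported matrix-valued function $\chi_\alpha g_0$, with the result transported back to a symmetric $2$-tensor on $M$. (Equivalently, one could apply the heat semigroup of the rough Laplacian of $h$ for a small time.) This yields a smooth symmetric $2$-tensor on $M$ with $g_{i,0} \to g_0$ in $C^0_{loc}$ as $\epsilon_i \to 0$.

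Properties (a), (c), (d), and (e) then follow from standard mollification arguments. Property (c) is the classical approximate-identity fact for continuous data. Property (a) follows because in each chart the matrix bounds $\Lambda_0^{-1} h_{jk} v^j v^k \leq (g_0)_{jk} v^j v^k \leq \Lambda_0 h_{jk} v^j v^k$ pass through the convolution (which is linear and non-negative in the mollifier), and the resulting bounds are compared to the mollification of $\sum_\alpha \chi_\alpha h = h$, which converges to $h$ in $C^k_{loc}$. For property (d), the key observation is that for any compact $K \subset M \setminus \Sigma$, once $\epsilon_i < \tfrac{1}{2} d_h(K, \Sigma)$ the values of $g_{i,0}$ on $K$ depend only on $g_0$ over $B(K, \epsilon_i) \subset M \setminus \Sigma$, where $g_0$ is smooth, so classical mollification theory gives $C^\infty_{loc}$ convergence. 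Property (e) follows from condition (ii), which implies $g_0 \in W^{1,p}_{loc}$ by a simple covering argument at scale $r_0$, together with the fact that mollification approximates $W^{1,p}$ functions in $W^{1,p}$.

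The main effort is the uniform Morrey bound (b), which I would handle by splitting into two scale ranges. For $r \geq \epsilon_i$, differentiating through the convolution and applying Jensen's inequality yields
\[
|\hdel g_{i,0}(x)|^p \; \leq \; C \sum_\alpha \bigl( |\hdel \chi_\alpha|^p |g_0|^p + \chi_\alpha^p |\hdel g_0|^p \bigr) \ast \phi_{\epsilon_i}(x);
\]
integrating over $B(x_0, r)$, swapping the order of integration, and invoking (ii) on the shifted balls at distance at most $\epsilon_i$ from $B(x_0,r)$ bounds the total by $C \, r^n + C L_0 (r + \epsilon_i)^{n - p + \delta} \leq C L_0 r^{n - p + \delta}$ for $r$ smaller than some $r_1 \leq r_0$. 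For $r < \epsilon_i$, H\"older's inequality applied to $\phi_{\epsilon_i} \ast |\hdel g_0|$ together with (ii) on $B(x, \epsilon_i)$ gives the pointwise estimate $|\hdel g_{i,0}(x)| \leq C \epsilon_i^{-1 + \delta/p}$, so $\fint_{B(x_0, r)} |\hdel g_{i,0}|^p \leq C \epsilon_i^{-p + \delta} \leq C r^{-p + \delta}$ since $\delta < p$ and $r \leq \epsilon_i$. The principal technical obstacle is keeping all constants independent of $i$, which is ensured by the bounded multiplicity of the cover and the scale-invariant use of (ii); no new cancellation is needed beyond standard Morrey-space analysis on Euclidean domains.
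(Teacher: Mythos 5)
Your construction is the same in spirit as the paper's (chartwise mollification of $\chi_\alpha g_0$ against a Euclidean mollifier), and your treatment of (a), (c), (d), (e) matches the standard arguments used there. Two points of comparison are worth recording. First, the paper does \emph{not} mollify globally: it covers only a compact neighborhood $K\supset\Sigma$ by finitely many charts, sets $U_0=M\setminus K$, and leaves the piece $\varphi_0 g_0$ untouched, so that $g_{i,0}=g_0$ near infinity. Your global version needs a locally finite atlas of charts of \emph{uniform} Euclidean size and bounded multiplicity on all of $M$; the hypothesis \eqref{eqn:h-remark-curvature-estimates} gives curvature bounds but no lower bound on the injectivity radius, so such an atlas need not exist on a complete noncompact $M$ (and with degenerating charts the constants in (b) would not be uniform in $i$). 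Since $g_0$ is smooth off the compact set $\Sigma$, there is no need to mollify there, and restricting the mollification to a compact neighborhood of $\Sigma$ as in the paper removes this issue entirely; with that modification your argument goes through. Second, for the key uniform Morrey bound (b) you split into the regimes $r\geq\epsilon_i$ (Jensen plus Fubini, reducing to hypothesis (ii) on balls of radius $r+\epsilon_i\leq 2r$) and $r<\epsilon_i$ (a pointwise bound $|\hdel g_{i,0}|\leq C\epsilon_i^{-1+\delta/p}$ from H\"older and (ii) at scale $\epsilon_i$, then $\epsilon_i^{-p+\delta}\leq r^{-p+\delta}$). This is correct, but the paper gets both regimes at once via Minkowski's integral inequality,
\begin{equation*}
    \Bigl(\int_{B(x_0,r)}|\hdel g_{i,0}^k|^p\,d\mu_h\Bigr)^{1/p}\leq \int \eta(y)\Bigl(\int_{B(x_0,r)}|\hdel g_0^k(x-i^{-1}y)|^p\,dx\Bigr)^{1/p}dy,
\end{equation*}
which reduces to hypothesis (ii) on a translated ball of the \emph{same} radius $r$, so no case analysis or pointwise estimate is needed. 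Your two-scale argument buys nothing extra here but is a perfectly valid alternative; do make sure to also absorb the $|\hdel\chi_\alpha|\,|g_0|$ term, which contributes $Cr^n\leq Cr^{n-p+\delta}$ for $r\leq 1$.
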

\begin{proof}
%

Without loss of generality, we will assume $M$ to be non-compact.  By the embedding theorem of Morrey (see for example ``Morrey's Lemma''  \cite[(1.3) ]{adams_morrey_2015}), assumption (ii) implies that $g_0$ is locally H\"older continuous with exponent $\delta/p < 1$ on $M$.  Since $\Sigma$ is compact, there is a point $p \in M$ and $R > 0$ such that $\Sigma \Subset B(p,R)$.

Note that $K=\overline{B(p,R)}$ is compact. We now cover $K$ by finitely many open coordinate charts $\{U_k\}_{k=1}^N$. Also let $U_0 = M \setminus K$. Let $\varphi_k$ be a partition of unity subordinate to $U_0 \cup \bigcup\limits_{k=1}^N U_k$. Then we can decompose the metric $g_0=\sum_{k=0}^N g^k_0$ on each chart by $g_0^k = \varphi_k g_0$. 
We might assume each $U_k$ is diffeomorphic to unit ball $B_{\mathbb{R}^n}(1)$ in $\mathbb{R}^n$. For $k = 1,\dots,N$, let $\eta$ be the standard mollifier with compact support inside $B_{\mathbb{R}^n}(1)$ and $\int_{U_k} \eta(y)dy = 1$. For $k\geq 1$, we define
    \begin{equation*}
        g_{i,0}^k(x) := \int_{U_k} g_0^k(x-i^{-1}y)\eta(y)dy.
    \end{equation*}
    for $x\in U_k$. Since $\varphi_k$ is compactly supported in $U_k$ for $k\geq 1$, we see that $g^k_{i,0}(x)=0$ for $x\to \partial U_k$ and sufficiently large $i$. Hence, $g^k_{i,0}$ extends trivially on $M$. Thus we define
$
        g_{i,0}:= \sum\limits_{k=1}^N g_{i,0}^k + g_0^0$ to be a metric on $M$.
  Near the infinity of $M$, $g_{i,0}=g_0^0=g_0$, while on compact set, $g_{i,0}$ is a mollification of $g_0$. It suffices to check that the above properties are satisfied on compact sets. For (a) and (b) above, it suffices to show that they are satisfied on compact set. Clearly by the standard mollification, we have (e) and (d) since $g_0\in W^{1,p}_{loc}(M)\cap C^\infty_{loc}(M\setminus \Sigma)$ so that $g_{i,0}^k\to g_0^k$ as $i\to+\infty$ in  $W^{1,p}_{loc}(M)\cap C^\infty_{loc}(M\setminus \Sigma)$ for each $k$.
    
To see (c), If $x\notin \cup_{k=1}^N U_k$, then the result holds directly.  If $x \in K$, then $x \in U_k$ for some $k = 1,\dots,N$. Fix any $\e>0$. By the H\"older continuity of $g_0$, for $i$ sufficiently large, $|g_0^k(x-i^{-1}y) - g_0^k(x)| < C|i^{-1}y|^\alpha <\varepsilon$ for any $x, y \in U_k$. Here $C$ depends also on the choice of partition of unity. Hence by $\int_{U_k} \eta(y)dy = 1$,
    \begin{align*}
        |g_{i,0}^k(x) - g_0^k(x)| &= \left|\int_{U_k} g_0^k(x-i^{-1}y)\eta(y)dy - g_0^k(x)\right| \nonumber \\
        &\leq \int_{U_k} | g_0^k(x-i^{-1}y) - g_0^k(x)|\eta(y)dy \nonumber \\
        &\leq \int_{U_k} |i^{-1}y|^\alpha \eta(y)dy \nonumber \\
        &\leq \int_{U_k} \varepsilon \eta(y)dy \nonumber = \varepsilon.
    \end{align*}
    From this, (a) also follows  for $i$ sufficiently large.
    

  It remains to prove (b). It suffices to show (b) for each $g_{i,0}^k$. We choose $r_1<r_0$ so that for each $x_0\in \mathrm{supp}(\varphi_k)$, $B(x_0,r_1)\Subset U_k$. For fixed $y$ with $|y| \leq 1$, note that $z:= x-i^{-1}y$ is a translation with determinant of Jacobian uniformly bounded from above and below. We use the fact that mollification and differentiation commute, i.e. $\partial g_i(x) = (\eta_i \ast \partial g)(x)$, then by Minkowski's integral inequality,
    \begin{align*}
        &\left(\int_{B(x_0,r)}|\partial g_{i,0}^k(x)|^pd\mu_h(x)\right)^\frac{1}{p} \nonumber \\
        &\quad= \left(\int_{B(x_0,r)}\left|\int_{U_k} |\partial g_0^k(x-i^{-1}y)\eta(y)dy\right|^{p}d\mu_h(x)\right)^\frac{1}{p} \nonumber \\
        &\quad\leq \int_{U_k} \left(\int_{B(x_0,r)}|\partial g_0^k(x-i^{-1}y)\eta(y)|^pdx\right)^\frac{1}{p}dy \nonumber \\
        &\quad\leq \int_{U_k}\eta(y)dy\left(\int_{B(x_0,r)} |\partial g_0^k(x-i^{-1}y)|^p d\mu_h(x)\right)^\frac{1}{p} \nonumber \\
        &\quad\leq \left(C\int_{B(x_0 + i^{-1}y, r)} |\partial g_0^k(z)|^p dz\right)^\frac{1}{p} \nonumber \\
        &\quad\leq \left(CL_0 r^{n-p+\delta}\right)^\frac{1}{p}.
    \end{align*}
 Here $C$ depends on the choice of partition of unity. So (b) is satisfied.
    
%
%
\end{proof}

Now we are ready to prove Theorem~\ref{thm:intro-application-1-statement} under the presence of singular sets $\Sigma$.
\begin{proof}[Proof of Theorem~\ref{thm:intro-application-1-statement} when $\Sigma\neq \emptyset$ ]

Let $g_{i,0}$ be the smooth approximation of $g_0$ obtained from Lemma~\ref{lem:mollification-scheme}.   By properties (a), (b) in Lemma \ref{lem:mollification-scheme} above, we know that for $i$ sufficiently large, that
    \begin{equation}\label{eqn:rough-initial-gi0-Linfty}
        (2\Lambda_0)^{-1}h \leq g_{i,0} \leq 2\Lambda_0 h,
    \end{equation}
    and for all $x_0\in M$ and $0<r<r_1$,
    \begin{equation}\label{eqn:rough-initial-gi0-Morrey}
        \fint_{B(x_0, r)} |\hdel g_{i,0}|^p d\mu_h \leq L r^{-p+\delta}
    \end{equation}
    for some $L> 0$. 
    
    Since each $g_{i,0}$ is smooth and satisfies (\ref{eqn:rough-initial-gi0-Linfty}) and (\ref{eqn:rough-initial-gi0-Morrey}) above for $i$ large enough, by Theorem 1.1, we obtain constants $T(n,h,\Lambda_0,p,\delta,h,L,r_1) > 0$ and a sequence of $B_k(n,h,\Lambda_0,p,\delta,h,L,r_1) > 0$ and solutions $g_i(t)$ to the Ricci-DeTurck $h$-flow on $M \times [0,T]$ such that $g_i(0) = g_{i,0}$ satisfying
$$\sup\limits_{M}|\hdel^k g_i(t)| \leq \frac{B_k}{t^{\frac{1}{2}(m-\frac{\delta}{p})}}$$ on $M \times (0,T]$.


So by the Arzel\`a--Ascoli theorem and taking a subsequence, we obtain smooth $g(t) = \lim\limits_{i\to\infty}g_i(t)$ on $M \times (0,S]$ with $g(t)$ satisfying (a) and (b). Since $g_{i,0}\to g_0$ in $C^\infty_{loc}(M\setminus \Sigma)$ as $i\to+\infty$, $g(t)\in C^\infty_{loc}\left((M\setminus \Sigma)\times [0,T]\right)$ using \cite[Proposition 2.2]{chu_ricci-deturck_2022}. 

It remains to show the uniqueness within the class of solution satisfying the property (a). If $M$ is compact, it follows from \cite{burkhardt-guim_pointwise_2019} since $g(t)$ attains $g_0$ in $C^0_{loc}$ sense.
It remains to consider the case when $M$ is non-compact. It follows almost identically to the proof of \cite[Theorem 3.1]{chu_ricci-deturck_2022} with modification. We give a sketch here. Suppose $g(t)$ and $\hat g(t)$ are two solution on $M\times (0,T]$ satisfying derivatives estimates in (a). Clearly, both $g(t)$ and $\hat g(t)$ converge to $g_0$ in $C^0_{loc}$ as $t\to 0$. Instead of using the concentration of gradient, we use the fact that $|\hdel g|+|\hdel \hat g|\leq Ct^{-\frac12 +\frac{\delta}{2p}}$ to show that the tensor $\a=g-\hat g$ satisfies 
\begin{equation}
\frac{\partial}{\partial t}\int_M \eta^4 |\a|^2 \,d\mu_h \leq C\int_M \eta^2 |\a|^2 \,d\mu_h +Ct^{-\frac12 +\frac{\delta}{2p}}\int_M \eta^4 |\a|^2 \,d\mu_h
\end{equation}
which  is analogous to \cite[(3.20)]{chu_ricci-deturck_2022}. Here $\eta$ is a smooth cut-off function on $M$ with compact support on $B(x,2)$. Since $t^{-\frac12 +\frac{\delta}{2p}}$ is integrable in $t$ near $t=0$, the argument can be carried over.
\end{proof}

\section{Applications: preserving distributional scalar curvature lower bound}\label{sec:applications}

In this section, we use the existence theory established above to study the preserving of weak scalar curvature lower bound along the Ricci flow smoothing.
In \cite{lee_positive_2015}, Lee--LeFloch introduced a notion of distributional scalar curvature that is defined for metrics with low regularity. In particular, for any $g \in L^\infty_{\text{loc}}\cap W^{1,2}_{\text{loc}}$ with locally bounded inverse $g^{-1}\in L^\infty_{\text{loc}}$, they define the scalar curvature distribution $R_g$ by
\begin{equation}\label{eqn:distributional-scalar-defn}
    \left\langle R_g, u\right\rangle := \int_M \left(-V\cdot\hdel\left(u\frac{d\mu_g}{d\mu_h}\right)+Fu\frac{d\mu_g}{d\mu_h}\right)d\mu_h
\end{equation}
for every compactly supported smooth test function $u:M\to\mathbb{R}$ and where
\begin{align*}
    &\Psi^k_{ij} := \frac{1}{2}g^{k\ell}\left(\hdel_ig_{j\ell}+\hdel_jg_{i\ell}-\hdel_{\ell}g_{ij}\right) \\
    &V^k := g^{ij}\Psi^{k}_{ij} - g^{ik}\Psi^{j}_{ji} \\
    &F := \tr_g\widetilde{Ric} - \hdel_kg^{ij}\Psi^k_{ij} + \hdel_kg^{ik}\Psi^{j}_{ji} + g^{ij}\left(\Psi^{k}_{k\ell}\Psi^{\ell}_{ij} - \Psi^{k}_{j\ell}\Psi^{\ell}_{ik}\right)
\end{align*}
and $\frac{d\mu_g}{d\mu_h}\in L^\infty_{\text{loc}}\cap W^{1,2}_{\text{loc}}$ is the density of $d\mu_g$ with respect to $d\mu_h$. Let $a$ be a constant. We say $R_g \geq a$ in the distributional sense when $    \left\langle R_g-a, u\right\rangle\geq 0$ for every non-negative smooth test function $u$. Clearly, when $g$ is $C^2$, the distributional scalar curvature $R_g$ coincides with the classical scalar curvature.

Our main motivation is to study metrics on compact manifolds which are smooth outside some singular sets. In other words, we want to consider the question of whether a scalar curvature lower bound can be extended to a distributional scalar curvature lower bound across a set where the metric is singular.  In \cite{jiang_removable_2022}, the authors consider this for when $g \in C^0 \cap W^{1,p}_{loc}(M)$ for $n \leq p \leq \infty$ and the singular set is small in the sense of Hausdorff dimension, see  \cite[Lemma 2.7]{jiang_removable_2022}. In this work, we use a notion of co-dimension for compact subsets $\Sigma \subset M$ based on the volume growth of tubular neighborhoods of $\Sigma$. 

\begin{definition}
\label{defn:codim}
For a complete smooth Riemannian manifold $M^n$ with smooth background metric $h$, a compact set $\Sigma$ of $M$ is said to have co-dimension at least $d_0 > 0$ if there exist $b > 0$ and $C > 0$ such that for all $0 < \varepsilon \leq b$
\begin{equation*}
    \text{Vol}_h\left(\Sigma(\varepsilon)\right) = \text{Vol}_h\left(\left\{x \in M : d_h(x,\Sigma) < \varepsilon\right\}\right) \leq C\varepsilon^{d_0}.
\end{equation*}
    
\end{definition}

We show that when $g$ satisfies the $L^\infty$ and the Morrey-type condition, and satisfies a scalar curvature lower bound outside of a compact set $\Sigma$, then the corresponding distributional scalar curvature lower bound holds when $\Sigma$ is not too large in the sense of definition~\ref{defn:codim}.

\begin{proposition}
\label{prop:distributional-scalar-curvature}
    Let $M^n$ be an $n$-dimensional manifold with smooth background metric $h$ satisfying \eqref{eqn:h-remark-curvature-estimates}. Suppose the metric $g$ satisifes:
    \begin{enumerate}[(i)]
        \item there is a $\Lambda > 0$ such that $\Lambda^{-1}h \leq g \leq \Lambda h$ on $M$;
        \item for $p \geq 2, \delta > 0, r_0 > 0$ there is a $L$ such that for all $x_0 \in M$ and $0 < r < r_0$,
        \begin{equation}
            \fint_{B(x_0, r)} |\hdel g|^p d\mu_h \leq L r^{-p + \delta};
        \end{equation}
        \item there is a compact $\Sigma \subset M$ with co-dimension $d$ at least $2 - \frac{\delta'}{p}$ for some $0<\delta'<\delta$ such that $g$ is smooth on $M \setminus \Sigma$ and $R_g \geq a$ holds in the classical sense on $M \setminus \Sigma$.
    \end{enumerate}
    Then for all smooth compactly supported non-negative test functions $u$, the distributional scalar curvature satisfies $\left\langle R_g - a, u \right\rangle \geq 0$.

\end{proposition}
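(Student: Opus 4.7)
The plan is to exhaust $M\setminus\Sigma$ by smooth cutoffs, apply the classical inequality there, and pass to the limit. Fix $u\geq 0$ smooth and compactly supported. Using the tubular neighborhoods $\Sigma(\e):=\{x:d_h(x,\Sigma)<\e\}$, construct $\eta_\e\in C^\infty(M)$ with $\eta_\e\equiv 0$ on $\Sigma(\e)$, $\eta_\e\equiv 1$ outside $\Sigma(2\e)$, $0\leq\eta_\e\leq 1$, and $|\hdel\eta_\e|\leq C/\e$. Since $u\eta_\e$ is smooth and compactly supported in $M\setminus\Sigma$, where $g$ is smooth with $R_g\geq a$, the classical integration by parts that identifies the integrand in \eqref{eqn:distributional-scalar-defn} with $\int R_g(\cdot)\,d\mu_g$ is valid, and
\begin{equation*}
\langle\langle R_g-a,u\eta_\e\rangle\rangle=\int_{M\setminus\Sigma}(R_g-a)\,u\eta_\e\,d\mu_g\geq 0.
\end{equation*}
It therefore suffices to show $\langle\langle R_g-a,u\eta_\e\rangle\rangle\to\langle\langle R_g-a,u\rangle\rangle$ as $\e\to 0$.

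Writing $\rho=d\mu_g/d\mu_h$ and expanding the difference by the Leibniz rule gives
\begin{align*}
\langle\langle R_g-a,u\rangle\rangle-\langle\langle R_g-a,u\eta_\e\rangle\rangle
&=-\int_M (V\cdot\hdel u)(1-\eta_\e)\rho\,d\mu_h+\int_M V\cdot u\,\hdel\eta_\e\,\rho\,d\mu_h\\
&\quad -\int_M (V\cdot\hdel\rho)\,u(1-\eta_\e)\,d\mu_h+\int_M (F-a\rho)u(1-\eta_\e)\,d\mu_h.
\end{align*}
Using the pointwise bounds $|V|,|\hdel\rho|\leq C|\hdel g|$ and $|F|\leq C(1+|\hdel g|^2)$ together with $p\geq 2$, the integrands of the first, third, and fourth terms are locally in $L^1$ on $\mathrm{supp}(u)$, while their effective support $\Sigma(2\e)\cap\mathrm{supp}(u)$ shrinks to a set of zero $h$-measure (the codimension assumption forces $\mathrm{Vol}_h(\Sigma)=0$); hence each vanishes by dominated convergence. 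The only delicate term is
\begin{equation*}
I_\e:=\int_M V\cdot u\,\hdel\eta_\e\,\rho\,d\mu_h,
\end{equation*}
supported on $\Sigma(2\e)\setminus\Sigma(\e)$ and carrying the factor $\e^{-1}$ from $\hdel\eta_\e$; this is the main obstacle.

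To control $I_\e$, I estimate $|I_\e|\leq C\|u\|_\infty\,\e^{-1}\int_{\Sigma(2\e)}|\hdel g|\,d\mu_h$ and then cover $\Sigma(2\e)$ by balls $\{B(x_i,\e)\}_{i=1}^N$ with $x_i\in\Sigma(2\e)$ chosen via a Vitali argument so that $\{B(x_i,\e/5)\}$ are disjoint. Volume comparison on $(M,h)$ together with the codimension hypothesis gives $N\leq C\e^{(2-\delta'/p)-n}$. Applying assumption (ii) on each ball and summing yields
\begin{equation*}
\int_{\Sigma(2\e)}|\hdel g|^p\,d\mu_h\leq C L\,\e^{(2-\delta'/p)-p+\delta}.
\end{equation*}
H\"older's inequality with exponents $p$ and $p/(p-1)$, combined with $\mathrm{Vol}_h(\Sigma(2\e))\leq C\e^{2-\delta'/p}$, then produces
\begin{equation*}
\int_{\Sigma(2\e)}|\hdel g|\,d\mu_h\leq C\,\e^{1+(\delta-\delta')/p},
\end{equation*}
so $|I_\e|\leq C\e^{(\delta-\delta')/p}\to 0$ because $\delta>\delta'$. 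The delicate exponent balance between the codimension $2-\delta'/p$ and the Morrey scaling $\e^{-p+\delta}$ is precisely what makes the argument succeed, and the strict inequality $\delta'<\delta$ supplies the positive surplus needed to absorb the $\e^{-1}$ coming from $\hdel\eta_\e$. This completes the passage to the limit and gives $\langle\langle R_g-a,u\rangle\rangle\geq 0$.
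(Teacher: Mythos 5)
Your proposal is correct and follows essentially the same route as the paper: cut off near $\Sigma$, use the classical inequality away from $\Sigma$, and kill the error terms via the covering bound $N\leq C\e^{d-n}$, the Morrey condition, and H\"older, with the critical cutoff-gradient term decaying like $\e^{(\delta-\delta')/p}$ exactly as in the paper's term II. The only (immaterial) differences are that you dispatch the non-critical terms by dominated convergence rather than explicit H\"older rates, and you use a Vitali covering in place of the paper's minimal-cover argument.
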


\begin{proof}
Fix a smooth compactly supported non-negative  function $u$. 
We will use $C$ to denote any constants depending only on $n,\Lambda,p,\delta,r_0,L>0$ and $u$ which might be adjusted from line to line.  As in \cite{jiang_removable_2022}, for any $\varepsilon > 0$, we let $\eta_\varepsilon$ be a smooth non-negative function so that $\eta_\varepsilon \equiv 1$ on $\Sigma(\varepsilon)$ and $\eta_\varepsilon \equiv 0$ on $M\setminus\Sigma$ with $|\hdel \eta_\varepsilon|\leq C\varepsilon^{-1}$. Then
    \begin{equation*}
        \langle R_{g} - a, u\rangle = \langle R_g - a, \eta_\varepsilon u\rangle + \langle R_g - a, (1-\eta_\varepsilon)u\rangle.
    \end{equation*}
    Since the support of $(1-\eta_\varepsilon)u$ is outside $\Sigma$, we have
    \begin{equation*}
        \langle R_g - a, (1-\eta_\varepsilon)u\rangle = \int_{M\setminus \Sigma(\varepsilon)} (R_g - a)(1-\eta_\varepsilon)u d\mu_g \geq 0
    \end{equation*}
    because $g$ is smooth and satisfies $R_g \geq a$ in the classical sense outside of $\Sigma(\varepsilon)$. So it suffices to show
    \begin{equation*}
        \lim\limits_{\varepsilon \to 0}\left|\langle R_g - a,\eta_\varepsilon u\rangle\right| = 0.
    \end{equation*}
    Then, again as in Lemma 2.7 of \cite{jiang_removable_2022}, we have by definition of $V, F$ and H\"older inequality,
    \begin{align}\label{eqn:distributional-scalar-curvature-integral-estimate}
        \left|\langle R_g - a,\eta_\varepsilon u\rangle\right| & \leq \int_M |V| \cdot \left| \hdel \left(\eta_\varepsilon u \frac{d\mu_g}{d\mu_h}\right)\right| d\mu_h + \int_M |F - a|\cdot \eta_\varepsilon u \frac{d\mu_g}{d\mu_h}d\mu_h \nonumber \\
        &\leq C\left(\int_{\Sigma(\varepsilon)}|\hdel g|^pd\mu_h\right)^\frac{1}{p}\text{Vol}_h(\Sigma(\varepsilon))^{1-\frac{1}{p}} \nonumber \\
        &\quad + C\left(\int_{\Sigma(\varepsilon)}|\hdel g|^pd\mu_h\right)^\frac{1}{p}\left(\int_{\Sigma(\varepsilon)}|\hdel\eta_\varepsilon|^\frac{p}{p-1}d\mu_h\right)^\frac{p-1}{p} \nonumber \\
        &\quad + C\left(\int_{\Sigma(\varepsilon)}|\hdel g|^pd\mu_h\right)^\frac{2}{p}\text{Vol}_h(\Sigma(\varepsilon))^{1-\frac{2}{p}} + C\text{Vol}_h(\Sigma(\varepsilon)) \nonumber \\
        &=: \text{I} + \text{II} + \text{III} + \text{IV}
    \end{align}
    where the constants above depend on $n,\Lambda$ and the $C^1$ norm of $u$.
    
    Since $\Sigma$ has co-dimension at least $d$, by definition there is a $C > 0$ such that
    \begin{equation*}
        \text{Vol}_h(\Sigma(\varepsilon)) \leq C\varepsilon^d
    \end{equation*}
    for  all $\varepsilon$ sufficiently small. Since $\Sigma$ is compact and $M$ carries the structure of a metric space with distance function $d_h$, $\Sigma$ is totally bounded, that is, for every fixed radius, it can be covered by a finite number of balls of that radius measured with respect to $d_h$. In particular, it can be covered by a finite number of balls of radius $\varepsilon/2$. Let $N$ be the minimal such number of balls, $B(x_k, \frac{\varepsilon}{2}), k = 1,\dots,N$. We first claim that
    \begin{equation*}
        \Sigma \subset \bigcup\limits_{k=1}^N B\left(x_k,\frac{\varepsilon}{2}\right) \subset \Sigma(\varepsilon).
    \end{equation*}
    The first inclusion is obvious. Now suppose $x \in B(x_k,\frac{\varepsilon}{2})$ for some $k$. By minimality of $N$, $B(x_k,\frac{\varepsilon}{2})\cap\Sigma \neq \emptyset$ and so $d_h(\Sigma,x_k) < \frac{\varepsilon}{2}$. Then by the triangle inequality we have
    \begin{equation*}
        d_h(\Sigma, x) \leq d_h(\Sigma, x_k) + d_h(x_k, x) < \frac{\varepsilon}{2} + \frac{\varepsilon}{2} = \varepsilon.
    \end{equation*}
    So the second inequality holds. Hence, we have
    \begin{equation*}
        \sum_{k=1}^N\text{Vol}_h\left(B\left(x_k,\frac{\varepsilon}{2}\right)\right) \leq \text{Vol}_h(\Sigma(\varepsilon)) \leq C\varepsilon^d.
    \end{equation*}
    By taking $\varepsilon$ small enough, we can also assume there is a constant $D=D(n,h)$ such that
    \begin{equation*}
        D^{-1}\varepsilon^n \leq \text{Vol}_h(B(\varepsilon)) \leq D\varepsilon^n.
    \end{equation*}
    So we have
    \begin{equation}\label{eqn:distributional-scalar-curvature-N-estimate}
        N \leq C\varepsilon^{d-n}.
    \end{equation}
    Similarly by the triangle inequality, we have that
    \begin{equation*}
        \Sigma(\varepsilon) \subset \bigcup\limits_{k=1}^N B\left(x_k,\frac{3\varepsilon}{2}\right)
    \end{equation*}
    since if $x \in \Sigma(\varepsilon)$, then there is a $y \in \Sigma$ such that $d_h(y,x) < \varepsilon$, and by $\Sigma \subset \bigcup\limits_{k=1}^N B\left(x_k,\frac{\varepsilon}{2}\right)$, there is an $x_k$ such that $d_h(x_k, y) < \frac{\varepsilon}{2}$, so $d_h(x,x_k) \leq d_h(x,y) + d_h(y,x_k) < \frac{3\varepsilon}{2}$.

Clearly, we have from definition that
 \begin{equation*}
        \text{IV} \leq C\varepsilon^d.
    \end{equation*}
    With (\ref{eqn:distributional-scalar-curvature-N-estimate}) and the Morrey assumption (ii), we can estimate the terms on the right hand side of (\ref{eqn:distributional-scalar-curvature-integral-estimate}) above. For I, we have
    \begin{align*}
        \text{I} &\leq C\left(\sum\limits_{k=1}^N \int_{B(x_k,\frac{3\varepsilon}{2})}|\hdel g|^pd\mu_h\right)^\frac{1}{p}\text{Vol}_h(\Sigma(\varepsilon))^{1-\frac{1}{p}} \nonumber \\
        &\leq C\left(NL\varepsilon^{n-p+\delta}\right)^\frac{1}{p}\text{Vol}_h(\Sigma(\varepsilon))^{1-\frac{1}{p}} \nonumber \\
        &\leq C(\varepsilon^{d-n+n-p+\delta})^\frac{1}{p}\varepsilon^{d\left(1-\frac{1}{p}\right)} \nonumber \\
        &= C\varepsilon^{d-1+\frac{\delta}{p}}.
    \end{align*} 
Argue in the exact same way, we have 
    \begin{align*}
        \text{III} &\leq C\varepsilon^{d-2+\frac{2\delta}{p}}.
    \end{align*}
    
%

It remains to estimate $\text{II}$. Using $|\hdel \eta|\leq C\e^{-1}$, we have 
\begin{equation}
\text{II}\leq C\e^{d-2+\frac{\delta}{p}}
\end{equation}
so that $\text{II}\to0$ as $\e\to 0$ since $d\geq 2-\frac{\delta'}{p}$ for some $\delta'<\delta$.    Substituting these back into (\ref{eqn:distributional-scalar-curvature-integral-estimate}) and by the value of $d$, we get that the right hand side of (\ref{eqn:distributional-scalar-curvature-integral-estimate}) vanishes as $\varepsilon\to0^+$, and so we are done.
\end{proof}

\subsection{Preservation of distributional  lower bound}

We now consider the preservation of distributional scalar curvature lower bounds along the Ricci flow. In \cite{jiang_weak_2021}, the authors showed that scalar curvature lower bounds in the distributional sense as above are preserved along the Ricci flow for initial metrics $g \in W^{1,p}(M^n)$ for $3 \leq n < p \leq \infty$. In this section, we improve the result by relaxing the regularity to Morrey-type condition with $2 \leq p \leq n$. We follow the approach of \cite{jiang_weak_2021} with modifications.

We first have the following lemma.

\begin{lemma}\label{lem:hessiansquared-spacetime-integrability}
    Let $(M^n,h)$ be a closed manifold and $g_0$ is a $C^0$ metric on $M$ satisfying. Let $g_0$ and $g(t)$ be as in Theorem \ref{thm:intro-application-1-statement}. Then there is a constant $C = C(n, M,h, \Lambda, L_0, p, \delta,r_0) < +\infty$ such that we have
    \begin{equation*}
        \int_0^T \int_M |\hdel^2 g(t)|^2 d\mu_h dt \leq C.
    \end{equation*}
\end{lemma}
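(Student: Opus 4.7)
The plan is to turn the pointwise evolution inequality \eqref{eqn:gradient-estimate-partialt-norm-hdelg} into an energy-type differential inequality for $\int_M |\hdel g|^2 \, d\mu_h$ on the closed manifold $M$, and then combine it with the smoothing gradient estimate of Theorem~\ref{thm:intro-application-1-statement}(a) together with the hypothesis $g_0 \in W^{1,2}_{loc}$ to bound the spacetime integral of $|\hdel^2 g|^2$.

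I would first integrate the inequality \eqref{eqn:gradient-estimate-partialt-norm-hdelg} over the closed $M$. By Stokes' theorem,
\begin{equation*}
\int_M g^{ij}\hdel_i\hdel_j|\hdel g|^2 \, d\mu_h = -\int_M (\hdel_i g^{ij}) \cdot \hdel_j |\hdel g|^2 \, d\mu_h,
\end{equation*}
which, using $|\hdel g^{-1}| \leq C|\hdel g|$ and Young's inequality, is absorbed into $\frac{1}{2C}\int_M |\hdel^2 g|^2 + C\int_M |\hdel g|^4$. This yields the differential inequality
\begin{equation*}
\frac{d}{dt}\int_M |\hdel g|^2 \, d\mu_h + \frac{1}{2C}\int_M |\hdel^2 g|^2 \, d\mu_h \leq C\int_M |\hdel g|^4 \, d\mu_h + C.
\end{equation*}

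Next I would bound $\int_M |\hdel g(t)|^2 \, d\mu_h$ uniformly in $t \in (0,T]$. Using $|\hdel g(t)|^2 \leq B_1^2 t^{-1+\delta/p}$ from Theorem~\ref{thm:intro-application-1-statement}(a), one has $\int_M |\hdel g|^4 \leq B_1^2 t^{-1+\delta/p} \int_M |\hdel g|^2$. Discarding the second-derivative term and applying Gronwall, which is closable because $\int_0^T t^{-1+\delta/p} \, dt < \infty$, gives
\begin{equation*}
\int_M |\hdel g(t)|^2 \, d\mu_h \leq C\Bigl(\int_M |\hdel g_0|^2 \, d\mu_h + 1\Bigr) \leq C,
\end{equation*}
where the initial integral is finite since $g_0 \in W^{1,2}_{loc}$ and $M$ is closed. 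Feeding this bound back into the differential inequality and integrating over $[0,T]$ produces $\int_0^T \int_M |\hdel^2 g|^2 \leq C$.

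The main technical step is justifying the energy manipulations down to $t = 0$, where $g(t)$ is only $C^0$. I would run the above argument for the smooth approximating sequence $g_i(t)$ from the proof of Theorem~\ref{thm:intro-application-1-statement}, starting from the mollified initial data $g_{i,0}$, for which every step is rigorous. Theorem~\ref{thm:intro-application-1-statement}(a) applies to each $g_i$ with $i$-independent constants, while standard mollifier estimates (combined with the hypothesis $g_0 \in W^{1,2}_{loc}$ and compactness of $M$) give $\|\hdel g_{i,0}\|_{L^2(M)} \leq C$ uniformly in $i$. This yields $\int_0^T \int_M |\hdel^2 g_i|^2 \leq C$ uniformly, and Fatou's lemma passes the bound to $g(t)$. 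The main obstacle is the Gronwall closing in Step~2: the coupling between $\int_M |\hdel g|^2$ and $\int_M |\hdel g|^4$ can be closed only thanks to the improved gradient rate $t^{-1/2+\delta/(2p)}$ of Proposition~\ref{prop:gradient-estimate}; the naive $t^{-1/2}$ Shi bound would produce the non-integrable factor $t^{-1}$.
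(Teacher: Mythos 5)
Your proof is correct and follows essentially the same route as the paper: both integrate the evolution inequality for $|\hdel g|^2$ over the closed manifold, use the smoothing gradient estimate to replace $|\hdel g|^4$ by $Ct^{-1+\delta/p}|\hdel g|^2$, and close the resulting differential inequality (the paper's weight $\exp(-At^{\delta/p})$ is exactly your Gronwall integrating factor). Your additional step of running the argument on the approximants $g_i(t)$ with uniformly bounded $\|\hdel g_{i,0}\|_{L^2(M)}$ and passing to the limit by Fatou is a welcome tightening of the paper's direct integration from $t=0$, where $g(t)$ is a priori only $C^0$.
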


\begin{proof}
    Recall that from (\ref{eqn:prelim-ricci-deturck-flow}), standard computations and the Cauchy-Schwarz inequality yields
    \begin{equation*}
        \frac{\partial}{\partial t} |\hdel g|^2 \leq g^{ij}\hdel_i\hdel_j |\hdel g|^2 - C_1 |\hdel^2 g|^2 + C_2|\hdel g|^4 + C_3
    \end{equation*}
    for constants $C_1, C_2, C_3$ that are only depending on $n, h$ where we have gradient estimate:
    \begin{equation*}
        C_2|\hdel g|^4 \leq C_2B_1|\hdel g|^2t^{-1+\frac{\delta}{p}},
    \end{equation*}
    so defining $f(x,t) = |\hdel g|^2\exp\left(-At^\frac{\delta}{p}\right)$ for some constant $A > 0$ to be determined, we have
    \begin{align*}
        \frac{\partial}{\partial t} f(t) &\leq \exp\left(-At^\frac{\delta}{p}\right)g^{ij}\hdel_i\hdel_j|\hdel g|^2 - C_1|\hdel^2 g|^2 \nonumber \\
        &\quad + C_2f(t)t^{-1+\frac{\delta}{p}} - Af(t)t^{-1+\frac{\delta}{p}} + C_3.
    \end{align*}
    where we are using the fact that for $t \in [0, T]$ we have that $\exp\left(-At^\frac{\delta}{p}\right)$ is uniformly bounded from above. Integrating by parts and Young's inequality yields
    \begin{equation*}
        \frac{\partial}{\partial t} \int_M fd\mu_h \leq \int_M -C_1|\hdel^2g|^2d\mu_h + \int_M\left(C_2+C_4-A\right)f(t)t^{-1+\frac{\delta}{p}}d\mu_h + C_3.
    \end{equation*}
    Now we choose $A$ large enough so that it dominates $C_2 + C_4$ and we obtain
    \begin{equation*}
        \frac{\partial}{\partial t} \int_M fd\mu_h + \int_M C_1|\hdel^2 g|^2d\mu_h \leq C_3.
    \end{equation*}
    
Since $M$ is compact, by covering argument we see that 
\begin{equation}
\int_M |\tilde\nabla g_0|^2 d\mu_h \leq C(n,M,h,r_0,L_0,p,\delta).
\end{equation} 
With this, we integrate in time from from $0$ to $T$ to obtain the result.
\end{proof}

Now let $M$ be a closed manifold and let $g(t)$ be the Ricci-DeTurck $h$-flow on $M$. 
Consider the heat kernel:
\begin{equation*}
    \left(\partial_t - \Delta_x-\nabla_X\right)K(\cdot,\cdot;y,s) = 0, \quad \lim\limits_{t\to s^+}K(\cdot,t;y,s) = \delta_y(\cdot)
\end{equation*}
and moreover satisfies for the conjugate heat operator:
\begin{equation*}
    \left(-\partial_s - \Delta_y - \del_X + R\right)K(x,t;\cdot,\cdot) = 0, \quad \lim\limits_{s\to t^-}K(x,t;\cdot,s) = \delta_x(\cdot).
\end{equation*}
where the Laplacian operators are taken with respect to $g(t)$. Let $\tilde{u}$ be an arbitrary non-negative $C^\infty$ function on $M$. We consider the conjugate heat equation with $\tilde{u}$ as final data, that is,
\begin{equation}\label{eqn:conjugate-heat-eqn}
    \begin{cases}
        &\frac{\partial}{\partial t} u = -\Delta u - \del_X u + Ru \quad \text{ on } M \times[0,T] \\
        &u\big|_{t=T} = \tilde{u}
    \end{cases}
\end{equation}
where $R$ is the scalar curvature with respect to $g(t)$. By the maximum principle, the solution is non-negative and given by convolving with the fundamental solution:
\begin{equation*}
    u(x,t) = \int_M K(y,T;x,t)\tilde{u}(y)d\mu_{g(T)}(y).
\end{equation*}

Similar to Proposition 4.1 of \cite{jiang_weak_2021}, we have the estimates for $u$ in the following lemma.
\begin{lemma}\label{lem:conjugate-heat-equation-estimates}
    Let $(M,h)$ be a closed Riemannian manifold and $g_0$ is a $C^0_{loc}\cap W^{1,p}_{loc}$ metric on $M$ satisfying assumptions in Theorem \ref{thm:intro-application-1-statement} with $p\geq 2$. Suppose $g(t)$ is the unique solution obtained from Theorem \ref{thm:intro-application-1-statement} and $u$ is a solution obtained as above. Then there exists $C_0>0$ depending only on $n,M,h,\Lambda,L_0,p,\delta,\tilde u$ such that
    \begin{enumerate}[(a)]
        \item $u(\cdot,t) \leq C_0$, for all $t \in [0,T];$
        \item $\displaystyle\int_M |\del_{g(t)} u(\cdot,t)|^2 d\mu_{g(t)} \leq C_0$, for all $t \in [0,T];$
        \item For any $a\in \mathbb{R}$, $\displaystyle\int_M (R_{g(t)} - a)u d\mu_{g(t)}$ is monotonically non-decreasing with respect to $t$.
    \end{enumerate}
\end{lemma}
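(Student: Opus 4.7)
The plan is to prove (a) by the maximum principle, (b) by an energy estimate pairing the pointwise curvature decay from Theorem~\ref{thm:intro-application-1-statement} with the space--time Hessian bound of Lemma~\ref{lem:hessiansquared-spacetime-integrability}, and (c) by a direct differentiation which collapses to a manifestly non-negative quantity. The common input is the decay
\begin{equation*}
    |R_{g(t)}|+|\mathrm{Ric}_{g(t)}|\le Ct^{-1+\b},\qquad \b:=\delta/(2p),
\end{equation*}
obtained from the schematic identity $R,\mathrm{Ric}\sim\hdel^2 g+(\hdel g)^{*2}$ together with the derivative estimates of Theorem~\ref{thm:intro-application-1-statement}; crucially $t^{-1+\b}$ is integrable on $(0,T]$.

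For (a), I would reverse time by $s=T-t$, $v(\cdot,s):=u(\cdot,T-s)$, so that $v$ solves the \emph{forward} equation $\partial_s v=\Delta v+\del_X v-Rv$ with smooth non-negative data $\tilde u$. Non-negativity is preserved by the weak maximum principle on the closed manifold. Evaluating at a spatial maximum yields $\frac{d}{ds}\sup_M v\le |R|_\infty \sup_M v$, and Gronwall with the integrable bound on $|R|_\infty$ delivers the uniform upper bound $C_0$.

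For (b), I would compute $\frac{d}{ds}\int|\del v|^2 d\mu_{g(T-s)}$ using Bochner's identity together with $\partial_s g^{ij}=-2R^{ij}-(\mathcal{L}_X g)^{ij}$ and $\partial_s d\mu_g=(R+\mathrm{div}_g X)d\mu_g$. A key observation is that on the closed manifold $M$ the three contributions arising from $\del_X v$, $(\mathcal{L}_X g)(\del v,\del v)$, and $|\del v|^2\mathrm{div}_g X$ combine, after integration by parts, into a perfect divergence and drop out. Absorbing the cross term $\int Rv\Delta v$ into the dissipative $-\int|\del^2 v|^2$ via Cauchy--Schwarz produces
\begin{equation*}
    \frac{d}{ds}\int|\del v|^2 d\mu_g \le C(T-s)^{-1+\b}\int|\del v|^2 d\mu_g+C\int R^2 v^2 d\mu_g.
\end{equation*}
The initial value $\int|\del\tilde u|^2 d\mu_{g(T)}$ is finite, the Gronwall factor $\int_0^T(T-s)^{-1+\b}ds$ is finite, and $|v|\le C_0$ by (a), so the uniform bound reduces to $\int_0^T\int_M R^2 d\mu_g\, dt<\infty$. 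Writing $R^2\lesssim|\hdel^2 g|^2+|\hdel g|^4$, the first piece is controlled by Lemma~\ref{lem:hessiansquared-spacetime-integrability}; for the second, the same differential inequality used in the proof of that lemma actually yields the pointwise-in-$t$ bound $\int_M|\hdel g(t)|^2 d\mu_h\le C$ (initialized by $\int_M|\hdel g_0|^2 d\mu_h<\infty$ on compact $M$, which follows from the Morrey hypothesis and H\"older with $p\ge 2$), and combining with $|\hdel g|_\infty^2\le B_1^2 t^{-1+2\b}$ from Theorem~\ref{thm:intro-application-1-statement} gives $\int_M|\hdel g|^4 d\mu_h\le Ct^{-1+2\b}$, integrable on $(0,T]$.

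For (c), I would differentiate directly, using the Ricci-DeTurck evolution $\partial_t R=\Delta R+2|\mathrm{Ric}|^2-\del_X R$ (the $-\del_X R$ arising since scalar curvature is transported by the diffeomorphisms generated by $X$), the conjugate heat equation, and $\partial_t d\mu_g=-(R+\mathrm{div}_g X)d\mu_g$. The two $(R-a)Ru$ contributions cancel; the Laplacian pair $\int(u\Delta R-R\Delta u)d\mu_g+a\int\Delta u\, d\mu_g$ vanishes by integration by parts on closed $M$; and the $X$-triple $u\del_X R+(R-a)\del_X u+(R-a)u\,\mathrm{div}_g X$ is exactly $\mathrm{div}_g((R-a)uX)$, whose integral also vanishes. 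What is left is $\frac{d}{dt}\int(R-a)u\, d\mu_g=2\int|\mathrm{Ric}|^2 u\, d\mu_g\ge 0$, using $u\ge 0$ from (a). The main obstacle is the space--time integrability of $R^2$ in (b): the naive bound $\int R^2 d\mu_g\le C|R|_\infty^2\mathrm{Vol}(M)\le Ct^{-2+2\b}$ fails to be integrable in time when $\b<1/2$, so one genuinely needs to couple the pointwise gradient estimate with the uniform-in-$t$ $L^2$ bound on $\hdel g$ implicit in Lemma~\ref{lem:hessiansquared-spacetime-integrability}.
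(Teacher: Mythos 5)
Your proposal is correct and follows essentially the same route the paper intends: the paper simply cites \cite[Proposition 4.1]{jiang_weak_2021} together with Lemma \ref{lem:hessiansquared-spacetime-integrability} and the integrable decay $|R_{g(t)}|\le Ct^{-1+\delta/(2p)}$, and your maximum-principle, energy-estimate, and direct-monotonicity computations are exactly the content of that argument, with the DeTurck vector field terms correctly collapsing to divergences on the closed manifold. You also correctly isolate the one genuine modification needed for $2\le p\le n$, namely obtaining $\int_0^T\int_M R^2\,d\mu\,dt<\infty$ by splitting $R^2\lesssim|\hdel^2g|^2+|\hdel g|^4$ and pairing the pointwise gradient decay with the uniform-in-$t$ bound on $\int_M|\hdel g|^2$ from the proof of Lemma \ref{lem:hessiansquared-spacetime-integrability}.
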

\begin{proof}
The proof is identical to that of \cite[Proposition 4.1]{jiang_weak_2021} using Lemma~\ref{lem:hessiansquared-spacetime-integrability} and the integrable interior estimates from Theorem~\ref{thm:intro-application-1-statement}.
\end{proof}

With these two lemmata, we have the following main application for the compact case. 

\begin{theorem}\label{thm:preservation-distributional-scalar-curvature-lower-bound}
    Let $(M^n,h)$ be a  closed manifold and $g_0$ be a $C^0$ metric satisfying
    \begin{enumerate}[(i)]
        \item there is a $\Lambda > 0$ such that $\Lambda^{-1}h \leq g_0 \leq \Lambda h$ on $M$;
        \item there exist $p \geq 2, \delta > 0, r_0 ,L> 0$ such that for all $x_0 \in M$ and $0 < r < r_0$,
        \begin{equation}
            \fint_{B(x_0, r)} |\hdel g_0|^p d\mu_h \leq L r^{-p + \delta};
        \end{equation}
   \item $R_{g_0}\geq a$ in the distributional sense on $M$ for some $a\in \mathbb{R}$.
    \end{enumerate}
    Let $g(t), t \in (0, T]$ be the Ricci-DeTurck $h$-flow from $g_0$ obtained in Theorem \ref{thm:intro-application-1-statement} with $2 \leq p \leq n$. Then for any $t \in (0, T]$, $R_{g(t)} \geq a$ in the classical sense on $M$.
\end{theorem}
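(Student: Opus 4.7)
The plan is to follow the strategy of \cite{jiang_weak_2021}, combining the monotonicity in Lemma \ref{lem:conjugate-heat-equation-estimates}(c) with a limiting argument at $t = 0^+$ to promote the distributional lower bound on $g_0$ to a classical lower bound on $g(t)$. Fix $t_0 \in (0,T]$ and a smooth non-negative test function $\tilde u$ on $M$; since $g(t_0)$ is smooth it suffices to show that
\[
\int_M (R_{g(t_0)} - a)\, \tilde u \, d\mu_{g(t_0)} \geq 0,
\]
and then let $\tilde u$ range over a dense set to conclude $R_{g(t_0)} \geq a$ pointwise. Solving the conjugate heat equation (\ref{eqn:conjugate-heat-eqn}) backward on $M\times[0,t_0]$ with final data $u(\cdot,t_0) = \tilde u$ produces $u \geq 0$ by the maximum principle, and Lemma \ref{lem:conjugate-heat-equation-estimates}(c) reduces the problem to showing
\[
\liminf_{t\to 0^+} \int_M (R_{g(t)} - a)\, u(\cdot,t)\, d\mu_{g(t)} \geq 0.
\]
For $t > 0$, since $g(t)$ is smooth on $M$, the integral on the left equals the distributional pairing $\langle\langle R_{g(t)} - a, u(\cdot,t)\rangle\rangle$ defined by (\ref{eqn:distributional-scalar-defn}).

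The core of the argument is to pass to the limit $t\to 0^+$ inside this distributional pairing. From Lemma \ref{lem:conjugate-heat-equation-estimates}(a)--(b) the family $\{u(\cdot,t)\}_{t>0}$ is uniformly bounded in $L^\infty(M) \cap W^{1,2}(M)$, so after extracting a subsequence $u(\cdot,t) \rightharpoonup u_0$ weakly in $W^{1,2}$, strongly in $L^2$ by Rellich, and $\ast$-weakly in $L^\infty$, with $u_0 \geq 0$. On the metric side, the $C^0_{loc}$ convergence $g(t) \to g_0$ from Theorem \ref{thm:intro-application-1-statement}, the bi-Lipschitz bound, the gradient estimate $|\hdel g(t)| \lesssim t^{-1/2+\delta/(2p)}$ with time-integrable singularity, and the spacetime Hessian bound of Lemma \ref{lem:hessiansquared-spacetime-integrability} together upgrade this to strong convergence $g(t) \to g_0$ in $W^{1,p}(M)$ as $t\to 0^+$, where compactness of $M$ is essential. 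Since $V$ is linear and $F$ is at most quadratic in $\hdel g$, and $d\mu_{g(t)}/d\mu_h$ together with its gradient converge in $L^p \cap L^\infty$, this strong convergence is enough to pass to the limit in every term of (\ref{eqn:distributional-scalar-defn}), yielding
\[
\liminf_{t\to 0^+} \langle\langle R_{g(t)} - a, u(\cdot,t)\rangle\rangle \geq \langle\langle R_{g_0} - a, u_0\rangle\rangle.
\]
To close the argument, approximate $u_0 \geq 0$ by a sequence of non-negative smooth mollifications $u_{0,k}$ converging strongly in $W^{1,2}(M)$ and pointwise a.e.\ with a uniform $L^\infty$ bound; by hypothesis (iii), $\langle\langle R_{g_0} - a, u_{0,k}\rangle\rangle \geq 0$ for each $k$, and the continuity of the distributional pairing in this topology (using $V_{g_0} \in L^p$, $F_{g_0} \in L^1$, $d\mu_{g_0}/d\mu_h \in L^\infty \cap W^{1,2}$, and dominated convergence) gives $\langle\langle R_{g_0} - a, u_0\rangle\rangle \geq 0$.

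The principal obstacle is establishing the strong $W^{1,p}$ convergence $g(t) \to g_0$ as $t \to 0^+$: the distributional formula is nonlinear in $\hdel g$ (through the $|\hdel g|^2$-type terms in $F$ and the pairing $V \cdot \hdel(u \, d\mu_g/d\mu_h)$ with $V$ linear in $\hdel g$), so weak convergence is not enough. The hypotheses $p \geq 2$ and $\delta > 0$ both enter essentially: the former makes $|\hdel g|^2$ lie in $L^{p/2}$ so that the quadratic terms in $F$ can be handled via Vitali or dominated convergence once strong $L^p$ convergence of $\hdel g$ is in hand, while the latter supplies the integrable-in-time gradient singularity $t^{-1/2+\delta/(2p)}$, which together with Lemma \ref{lem:hessiansquared-spacetime-integrability} lets one upgrade $C^0$ to $W^{1,p}$ convergence by integrating the evolution equation (\ref{eqn:prelim-rdf-local-coords}) in time.
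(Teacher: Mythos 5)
Your overall architecture differs from the paper's in a crucial way: you run the monotonicity along the singular limit flow $g(t)$ and try to pass to the limit $t\to 0^+$ in the distributional pairing, whereas the paper runs the monotonicity along the \emph{smooth approximating} flows $g_i(t)$ started from the mollifications $g_{i,0}$ of Lemma \ref{lem:mollification-scheme}, compares $\langle\langle R_{g_{i,0}}-a,u_{i,0}\rangle\rangle$ with $\langle\langle R_{g_0}-a,u_{i,0}\rangle\rangle$ \emph{at time zero}, and only then lets $i\to\infty$ at the fixed positive time $t$. The paper's route only requires $g_{i,0}\to g_0$ in $W^{1,p}(M)$, which is a standard property of mollification recorded in Lemma \ref{lem:mollification-scheme}(e); your route requires the much stronger parabolic statement that $g(t)\to g_0$ in $W^{1,p}(M)$ as $t\to 0^+$, and this is where your argument has a genuine gap.

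That strong $W^{1,p}$ convergence is asserted but not proved, and it does not follow from the estimates available. Your suggested mechanism --- ``integrating the evolution equation in time'' --- controls $\|g(t)-g_0\|$ but not $\|\hdel g(t)-\hdel g_0\|$: differentiating \eqref{eqn:prelim-rdf-local-coords} once gives $\partial_t\hdel g\sim \hdel^3 g+\hdel g\ast\hdel^2 g+\cdots$, and the only bound on $\hdel^3 g$ is the interior estimate $|\hdel^3 g|\leq B_3 t^{-\frac{1}{2}(3-\frac{\delta}{p})}$, which is \emph{not} integrable in $t$ near $0$ (the exponent exceeds $1$ for $\delta<p$). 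Lemma \ref{lem:hessiansquared-spacetime-integrability} only gives $\hdel^2 g\in L^2_tL^2_x$ and, via its proof, a uniform $W^{1,2}$ bound on $g(t)$; this yields at best weak $L^2$ subsequential convergence of $\hdel g(t)$, which, as you yourself note, is insufficient for the terms quadratic in $\hdel g$ in $F$ and for the product $V\cdot\hdel(d\mu_g/d\mu_h)$. For $p>2$ you do not even have a uniform-in-$t$ bound on $\|\hdel g(t)\|_{L^p}$ from the paper's lemmas. A secondary (smaller) issue is that your limit test function $u_0$ is only $L^\infty\cap W^{1,2}$, so hypothesis (iii) does not apply to it directly; your mollification-and-continuity fix is plausible, but the paper sidesteps this entirely because the functions $u_{i,0}$ it tests against are smooth, being time-zero values of conjugate heat solutions along smooth flows. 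To repair your proof you should restructure it along the paper's lines: mollify the initial data, exploit monotonicity on each smooth flow down to $t=0$, and perform the delicate comparison of distributional pairings between $g_{i,0}$ and $g_0$ using $\|g_{i,0}-g_0\|_{W^{1,p}(M)}\to 0$ and the uniform bounds of Lemma \ref{lem:conjugate-heat-equation-estimates}.
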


\begin{proof}
Fix any $t>0$. It suffices to show 
    \begin{equation*}
        \langle   R_{g(t)}-a, \tilde u \rangle  \geq 0
    \end{equation*}
    for any arbitrary nonnegative function $\tilde{u}\in C^\infty(M)$ where $\langle R,u\rangle $ is defined as in \eqref{eqn:distributional-scalar-defn}. To do this, we follow the strategy as in Theorem 5.1 of \cite{jiang_weak_2021}, but modify it for our setting. 
    
We pay extra attention to the construction of $g(t)$.  Let $g_{i,0}$ be as in Lemma \ref{lem:mollification-scheme} and $g_i(t)$ be solutions to the Ricci-DeTurck $h$-flow starting from $g_{i,0}$ as in Theorem \ref{thm:intro-application-1-statement} above so that $g(t)$ is the smooth limit of $g_i(t)$ as $i\to+\infty$ on $M\times (0,T]$.

Let $u_i$ be the solution to (\ref{eqn:conjugate-heat-eqn}) (with respect to $g_i(t)$) with $u_{i,t} = \tilde{u}$ and $u_{i,0}(x) = u_i(x,0)$. Note that $u_i$ satisfies the conclusion in Lemma \ref{lem:conjugate-heat-equation-estimates} above uniformly as $i\to+\infty$. In particular, we have
    \begin{equation}\label{eqn:scalar-monotonicity}
        \int_M (R_{g_i(t)}-a)\tilde{u}d\mu_{g_i(t)} \geq \int_M (R_{g_{i,0}}-a)u_{i,0}d\mu_{g_{i,0}}.
    \end{equation}
    Since by hypothesis we have that $\langle R_{g_0} - a,u_{i,0}\rangle \geq 0$, what remains is to examine the difference
    \begin{equation*}
        \left|\langle  R_{g_{i,0}}-a, u_{i,0}\rangle  - \langle R_{g_0} - a, u_{i,0}\rangle \right|.
    \end{equation*}

    By the triangle inequality,
    \begin{align*}
        \left|\left\langle R_{g_{i,0}}-a, u_{i,0}\right\rangle - \left\langle R_{g_0} - a, u_{i,0}\right\rangle \right| &\leq \left|\langle R_{g_{i,0}},u_{i,0}\rangle - \langle R_{g_0}, u_{i,0}\rangle\right| \nonumber \\
        &\quad + |a|\left|\int_M u_{i,0} d\mu_{g_{i,0}} - \int_M u_{i,0} d\mu_{g_0}\right|
    \end{align*}
    Since $g_{i,0} \to g_0$ globally in $C^0$, we observe that 
    \begin{equation*}
        \lim\limits_{i\to\infty}\left\lVert \frac{d\mu_{g_{i,0}}}{d\mu_{g_0}} - 1\right\rVert_{C^0(M)} = 0
    \end{equation*}
and
    \begin{equation*}
        \lim\limits_{i\to\infty}\left\lVert \frac{d\mu_{g_0}}{d\mu_h} - \frac{d\mu_{g_{i,0}}}{d\mu_h}\right\rVert_{C^0(M)} = 0.
    \end{equation*}

    Then to handle the second term above, by (a) of Lemma \ref{lem:conjugate-heat-equation-estimates} above, we have
    \begin{align*}
        |a|\left|\int_M u_0 d\mu_{g_{i,0}} - \int_M u_0 d\mu_{g_0}\right| &\leq |a|\left|\int_M u_0\left(\frac{d\mu_{g_{i,0}}}{d\mu_{g_0}} - 1\right)d\mu_{g_0}\right| \nonumber \\
        &\leq C\left\lVert \frac{d\mu_{g_{i,0}}}{d\mu_{g_0}} - 1\right\rVert_{C^0(M)}
    \end{align*}
    for some positive constant $C$ depending on $a, n, k_1, k_2, \Lambda, p, \delta, \tilde{u}$.

    For the first term, we expand out the definition of distributional scalar curvature. Let $V, F$ be the vector and scalar fields associated with $g_0$, and let $V_i, F_i$ be the vector and scalar fields associated with $g_{i,0}$ that appear in the definition for distributional scalar curvature (\ref{eqn:distributional-scalar-defn}). Then by triangle inequality, we have
    \begin{align*}
        &\left|\langle R_{g_{i,0}},u_{i,0}\rangle - \langle R_{g_0}, u_{i,0}\rangle\right| \nonumber \\
        &\quad \leq \int_M |V - V_i|\left|\hdel\left(u_{i,0}\frac{d\mu_{g_{i,0}}}{d\mu_h}\right)\right|d\mu_h + \int_M |V| \left|\hdel\left(u_{i,0}\frac{d\mu_{g_0}}{d\mu_h} - u_{i,0}\frac{d\mu_{g_{i,0}}}{d\mu_h}\right)\right|d\mu_h \nonumber \\
        &\qquad + \int_M |F_i u_{i,0} - F u_{i,0}|\left|\frac{d\mu_{g_{i,0}}}{d\mu_h}\right|d\mu_h + \int_M |Fu_{i,0}|\left|\frac{d\mu_{g_{i,0}}}{d\mu_h} - \frac{d\mu_{g_0}}{d\mu_h}\right|d\mu_h \nonumber \\
        &\quad =: \text{I} + \text{II} + \text{III} + \text{IV}
    \end{align*}
    and we estimate each of the above terms separately. In the following, the constants $C_m, m \in \mathbb{N}$ below will come from (a) and (b) of Lemma \ref{lem:conjugate-heat-equation-estimates} and the Morrey condition on $g_0$. In other words, $C_m$ will at most depend on $n, k_1, k_2, \Lambda, p, \delta, \tilde{u}, L, h$. The constants may be changing line by line.

    Before handling each term, observe that since $M$ is compact, we have that $g_{i,0} \to g_0$ in $W^{1,p}(M)$ by (e) of Lemma \ref{lem:mollification-scheme} above. 
    
%

    For I, we have by H\"older inequality,
    \begin{align*}
        \text{I} &= \int_M |V - V_i||\hdel u_{i,0}|\left|\frac{d\mu_{g_{i,0}}}{d\mu_h}\right|d\mu_h + \int_M |V - V_i||u_{i,0}|\left|\hdel\frac{d\mu_{g_{i,0}}}{d\mu_h}\right|d\mu_h \nonumber \\
        &\leq C_1\int_M |\hdel g_0 - \hdel g_{i,0}||\hdel u_{i,0}|d\mu_h + C_2\int_M |\hdel g_0 - \hdel g_{i,0}||\hdel g_{i,0}|d\mu_h \nonumber \\
        &\leq C_1\left(\int_M |\hdel g_{i,0} - \hdel g_0|^pd\mu_h\right)^\frac{1}{p}\left(\int_M |\hdel u_{i,0}|^2d\mu_h\right)^\frac{1}{2}\text{Vol}_h(M)^\frac{p-2}{2p} \nonumber \\
        &\quad + C_2\left(\int_M |\hdel g_{i,0} - \hdel g_0|^pd\mu_h\right)^\frac{1}{p}\left(\int_M |\hdel g_{i,0}|^pd\mu_h\right)^\frac{1}{p}\text{Vol}_h(M)^\frac{p-2}{p} \nonumber \\
        &\leq C_1\left\lVert g_{i,0} - g_0\right\rVert_{W^{1,p}(M)} + C_2\left\lVert g_{i,0} - g_0\right\rVert_{W^{1,p}(M)}
    \end{align*}
    where for the last inequality we are using the fact that $g_{i,0}$ satisfies the Morrey-type condition and a covering argument from the fact that $M$ is compact. For II, we have
    \begin{align*}
        \text{II} &= \int_M |V||\hdel u_{i,0}|\left|\frac{d\mu_{g_0}}{d\mu_h} - \frac{d\mu_{g_{i,0}}}{d\mu_h}\right|d\mu_h + \int_M |V||u_{i,0}|\left|\hdel\left(\frac{d\mu_{g_0}}{d\mu_h}-\frac{d\mu_{g_{i,0}}}{d\mu_h}\right)\right|d\mu_h \nonumber \\
        &\leq C_3\int_M |\hdel g_0||\hdel u_{i,0}|\left|\frac{d\mu_{g_0}}{d\mu_h} - \frac{d\mu_{g_{i,0}}}{d\mu_h}\right|d\mu_h + C_4\int_M |\hdel g_0||\hdel g_0 - \hdel g_{i,0}|d\mu_h \nonumber \\
        &\leq C_3\left\lVert \frac{d\mu_{g_0}}{d\mu_h} - \frac{d\mu_{g_{i,0}}}{d\mu_h}\right\rVert_{C^0(M)}\left(\int_M |\hdel g_0|^pd\mu_h\right)^\frac{1}{p} \nonumber \\
        &\qquad \times\left(\int_M |\hdel u_{i,0}|^2d\mu_h\right)^\frac{1}{2}\text{Vol}_h(M)^\frac{p-2}{p} \nonumber \\
        &\quad + C_4\left(\int_M |\hdel g_0 - \hdel g_{i,0}|^pd\mu_h\right)^\frac{1}{p}\left(\int_M |\hdel g_0|^pd\mu_h\right)^\frac{1}{p}\text{Vol}_h(M)^\frac{p-2}{p} \nonumber \\
        &\leq C_3\left\lVert \frac{d\mu_{g_0}}{d\mu_h} - \frac{d\mu_{g_{i,0}}}{d\mu_h}\right\rVert_{C^0(M)} + C_4\left\lVert g_{i,0} - g_0\right\rVert_{W^{1,p}(M)}.
    \end{align*}
    For III, we have
    \begin{align*}
        \text{III} &= \int_M |F_i u_{i,0} - F u_{i,0}|\left|\frac{d\mu_{g_{i,0}}}{d\mu_h}\right|d\mu_h \leq C_5 \int_M |F_i - F|d\mu_h \nonumber \\
        &\leq C_5 \left(\int_M |F_i - F|^\frac{p}{2}d\mu_h\right)^\frac{2}{p}\text{Vol}_h(M)^\frac{p-2}{p} \leq C_5\left\lVert g_{i,0} - g_0\right\rVert_{W^{1,p}(M)}^2.
    \end{align*}
    Finally for IV, we have
    \begin{align*}
        \text{IV} &= \int_M |F u_{i,0}|\left|\frac{d\mu_{g_{i,0}}}{d\mu_h} - \frac{d\mu_{g_0}}{d\mu_h}\right|d\mu_h \nonumber \\
        &\leq C_6\left\lVert \frac{d\mu_{g_0}}{d\mu_h} - \frac{d\mu_{g_{i,0}}}{d\mu_h}\right\rVert_{C^0(M)}\left(\int_M |\hdel g_0|^pd\mu_h\right)^\frac{1}{p}\text{Vol}_h(M)^\frac{p-2}{p} \nonumber \\
        &\leq C_6\left\lVert \frac{d\mu_{g_0}}{d\mu_h} - \frac{d\mu_{g_{i,0}}}{d\mu_h}\right\rVert_{C^0(M)}.
    \end{align*}

    Combining all of the above, we get
    \begin{equation*}
        \left|\langle R_{g_{i,0}}-a, u_{i,0}\rangle - \langle R_{g_0} - a, u_{i,0}\rangle  \right| \leq Cb_i
    \end{equation*}
    for some constant $C = C(n,k_1,k_2,\Lambda,p,\delta,\tilde{u},a,L,\text{Vol}_h(M))$ and $b_i$ is a positive function of $i$ which satisfies $\lim\limits_{i\to\infty} b_i = 0$. Then combining this with (\ref{eqn:scalar-monotonicity}) above, we obtain
    \begin{equation*}
        \int_M (R_{g_i(t)}-a)\tilde{u}d\mu_{g_i(t)} \geq \int_M (R_{g_{i,0}}-a)u_{i,0}d\mu_{g_{i,0}} \geq -Cb_i.
    \end{equation*}
    Finally, observing that $g_i(t)$ converges smoothly to $g(t)$ as $i \to \infty$ for each $t>0$, we let $i \to \infty$ in the inequality above to obtain the desired result.
\end{proof}

Together with Proposition~\ref{prop:distributional-scalar-curvature}, we have a removable singularity result as an application.

\begin{corollary}\label{thm:non-cpt-dist-scalar-curvature-lower-bdd}
Suppose $(M^n,h)$ is a closed manifold and $g_0$ is a $C^0\cap W^{1,p}$ metric on $M$ satisfying the assumption in Proposition~\ref{prop:distributional-scalar-curvature} with $a=0$. If the Yamabe invariant of $M$ is non-positive or equivalently $M$ does not admit metric with positive scalar curvature, then $g_0$, then $g_0$ is Ricci-flat outside $\Sigma$.
\end{corollary}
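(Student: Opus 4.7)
The plan is to chain Proposition~\ref{prop:distributional-scalar-curvature}, Theorem~\ref{thm:preservation-distributional-scalar-curvature-lower-bound}, the DeTurck trick, and the standard strong minimum principle for scalar curvature along Ricci flow. First, since $g_0$ satisfies the hypotheses of Proposition~\ref{prop:distributional-scalar-curvature} with $a=0$ (bi-Lipschitz, Morrey-type with $p\geq 2$, smooth outside $\Sigma$ with classical $R_{g_0}\geq 0$ there, and $\Sigma$ having co-dimension at least $2-\delta'/p$), we obtain $R_{g_0}\geq 0$ in the distributional sense on all of $M$.

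Next, invoking Theorem~\ref{thm:intro-application-1-statement} produces a smooth Ricci-DeTurck $h$-flow $g(t)$ on $M\times(0,T]$ with $g(t)\to g_0$ in $C^0_\text{loc}(M)$ and in $C^\infty_\text{loc}(M\setminus\Sigma)$. All hypotheses of Theorem~\ref{thm:preservation-distributional-scalar-curvature-lower-bound} are then satisfied, so $R_{g(t)}\geq 0$ classically on $M$ for every $t\in(0,T]$.

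We now convert to an honest Ricci flow via the DeTurck trick. Fix any $\tau\in(0,T)$; the derivative estimates (a) of Theorem~\ref{thm:intro-application-1-statement} make $X_h(g(t))$ smooth and bounded on $M\times[\tau,T]$, so (\ref{eqn:prelim-rdf-to-rf-diffeomorphisms}) with $\chi_\tau=\mathrm{id}$ integrates to a smooth family of diffeomorphisms and $\hat g(t):=\chi_t^\ast g(t)$ is a smooth Ricci flow on $M\times[\tau,T]$ with $R_{\hat g(t)}=R_{g(t)}\circ\chi_t\geq 0$. Applying the evolution equation
\begin{equation*}
    \frac{\partial}{\partial t} R_{\hat g(t)} = \Delta_{\hat g(t)} R_{\hat g(t)} + 2|\mathrm{Ric}(\hat g(t))|^2
\end{equation*}
together with the strong minimum principle on the connected closed manifold $M$ yields the dichotomy: either $R_{\hat g(t_1)}(x_1)>0$ at some $(x_1,t_1)$, in which case $R_{\hat g(t)}>0$ everywhere on $M$ for all $t>t_1$---producing a smooth metric of positive scalar curvature on $M$ and contradicting the Yamabe hypothesis---or $R_{\hat g(t)}\equiv 0$ on $M\times[\tau,T]$. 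In the latter case the evolution equation forces $|\mathrm{Ric}(\hat g(t))|^2\equiv 0$, so $\hat g(t)$, and hence $g(t)=(\chi_t^{-1})^\ast\hat g(t)$, is Ricci-flat on $M$. Since $\tau>0$ was arbitrary, $g(t)$ is Ricci-flat on $M$ for every $t\in(0,T]$.

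Finally, using $g(t)\to g_0$ in $C^\infty_\text{loc}(M\setminus\Sigma)$ from conclusion (c) of Theorem~\ref{thm:intro-application-1-statement}, the Ricci tensors converge locally smoothly on $M\setminus\Sigma$ and the limit must vanish, giving $\mathrm{Ric}(g_0)\equiv 0$ on $M\setminus\Sigma$. The only non-routine step is the parabolic dichotomy, which requires care in citing the strong minimum principle to rule out a positive-scalar-curvature metric; the remainder is an assembly of the results already established in the paper.
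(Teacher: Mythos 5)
Your proposal is correct and follows essentially the same route as the paper: chain Proposition~\ref{prop:distributional-scalar-curvature} and Theorem~\ref{thm:preservation-distributional-scalar-curvature-lower-bound} to get $R_{g(t)}\geq 0$ for $t>0$, apply the strong minimum principle for the scalar curvature evolution to conclude $\mathrm{Ric}(g(t))\equiv 0$ (the positive-scalar-curvature branch being excluded by the Yamabe hypothesis), and pass to the limit $t\to 0$ using the smooth convergence on $M\setminus\Sigma$. You merely spell out details the paper leaves implicit, such as conjugating to a genuine Ricci flow on $[\tau,T]$ before invoking the evolution equation for $R$.
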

\begin{proof}
By Proposition~\ref{prop:distributional-scalar-curvature} and Theorem~\ref{thm:preservation-distributional-scalar-curvature-lower-bound}, there exists a Ricci Deturck $h$-flow on $M$ starting from $g_0$ in sense of  $C^0(M)\cap C^\infty_{loc}(M\setminus\Sigma)$ so that $R_{g(t)}\geq 0$.  It follows from the strong maximum principle that $\mathrm{Ric}_{g(t)}\equiv 0$ for all $t>0$. The result follows from smooth convergence away from $\Sigma$ by letting $t\to0$.
\end{proof}

\begin{remark}    
In case the metric $g$ is Lipschitz across the singularity, we are free to choose $\delta=p$. Consider the case where the singularity is in the form of a hypersurface, it was well-known that without additional assumption on mean curvature, the scalar curvature rigidity is in general false while Miao \cite{miao_positive_2003} and Shi-Tam \cite{shi_scalar_2016} showed that the mean curvature inequality is indeed necessary and sufficient. With this in mind, we see that $\delta'<\delta$ is necessary.
\end{remark}

\printbibliography[title=References]

\end{document}